\def\R{\mathbb{R}}
\def\p{{\mathfrak p}}
\def\lcm{\mbox{lcm}}
\def\A{{\mathcal A}}
\def\Pp{{\mathcal P}}
\def\rad{\mathrm{rad}}
\newcommand{\leg}[2]{\genfrac{(}{)}{}{}{#1}{#2}}
\newtheorem{thm}{Theorem}[section]
\newtheorem{prop}[thm]{Proposition}
\newtheorem{lem}[thm]{Lemma}
\newtheorem{conj}[thm]{Conjecture}
\theoremstyle{remark}
\newtheorem*{rmk}{Remark}
\newtheorem*{examples}{An example and a non-example}
\begin{document}

\title{The reciprocal sum of  divisors of Mersenne numbers}
\author{Zebediah Engberg}
\address{Wasatch Academy\\ Mt. Pleasant, UT 84647}
\email{zeb.engberg@wasatchacademy.org}

\author{Paul Pollack}
\address{Department of Mathematics\\ University of Georgia\\ Athens, GA 30602}
\email{pollack@uga.edu}

\begin{abstract} We investigate various questions concerning the reciprocal sum of  divisors, or prime divisors, of the Mersenne numbers $2^n-1$. Conditional on the Elliott--Halberstam Conjecture and the Generalized Riemann Hypothesis, we determine $\max_{n\le x} \sum_{p \mid 2^n-1} 1/p$ to within $o(1)$ and $\max_{n\le x} \sum_{d\mid 2^n-1}1/d$ to within a factor of $1+o(1)$, as $x\to\infty$. This refines, conditionally, earlier estimates of Erd\H{o}s and Erd\H{o}s--Kiss--Pomerance. Conditionally (only) on GRH, we also determine $\sum 1/d$ to within a factor of $1+o(1)$ where $d$ runs over all numbers dividing $2^n-1$ for some $n\le x$. This conditionally confirms a conjecture of Pomerance and answers a question of Murty--Rosen--Silverman. Finally, we show that both $\sum_{p\mid 2^n-1} 1/p$ and $\sum_{d\mid 2^n-1}1/d$ admit continuous distribution functions in the sense of probabilistic number theory.
\end{abstract}

\maketitle

\section{Introduction}
Let $f(n) = \sum_{p\mid 2^n-1} 1/p$ and $F(n) = \sum_{d\mid 2^n-1}1/d$, so that $f(n)$ is the reciprocal sum of the primes dividing the $n$th Mersenne number, and $F(n)$ is the corresponding sum over all divisors. In this note we are concerned with the statistical distribution of $f(n)$ and $F(n)$ as $n$ varies, continuing earlier investigations of Erd\H{o}s \cite{erdos51, erdos71}, Pomerance \cite{pomerance86}, and Erd\H{o}s--Kiss--Pomerance \cite{EKP91}.

It will be convenient in what follows to write $\ell(d)$ for the multiplicative order of $2$ modulo the odd number $d$.

Our starting point is the following observation. Let $z\ge 2$, and let $n$ be the least common multiple of the natural numbers not exceeding $z$. Then
\[ f(n) = \sum_{p\mid 2^n-1}\frac{1}{p} = \sum_{\ell(p) \mid n} \frac{1}{p} \ge \sum_{2 < p \le z}\frac{1}{p},  \]
the last inequality holding on account of having $\ell(p) < p \le z$ for each odd prime $p\le z$. As is well-known \cite[Theorem 427, p. 466]{HW08}, there is a constant $C_1 = 0.26149\dots$ (the \textsf{Meissel--Mertens constant}) with the property that $\sum_{p \le z} \frac{1}{p} = \log\log{z} + C_1 + o(1)$ as $z\to\infty$. Hence,
\[ f(n) \ge \log\log{z} + C_1 - \frac{1}{2}+o(1). \]
Now taking $z=\frac{1}{2}\log{x}$, we deduce from the prime number theorem that
\begin{equation}\label{eq:easylower} \max_{n\le x} f(n) \ge \log\log\log{x} + C_1 -\frac{1}{2}+o(1). \end{equation}

Perhaps surprisingly, given the ease with which \eqref{eq:easylower} was established, this lower bound on $\max_{n\le x} f(n)$ is close to best possible. Indeed, in 1971 Erd\H{o}s proved the complementary upper bound
\begin{equation}\label{eq:erdosfupper} \max_{n\le x} f(n) \le \log\log\log{x} + O(1). \end{equation}
In the same paper \cite{erdos71}  he asks whether equality holds in \eqref{eq:easylower}.

Erd\H{o}s's question seems to have lain dormant for 20 years. In 1991, the study of $f(n)$ was resurrected by Erd\H{o}s, Kiss, and Pomerance \cite{EKP91}. The main thought there is to study the local distribution of $f(n)$. However, on p.\ 271 of their paper they briefly digress to give a negative answer to Erd\H{o}s's question, by showing that
\begin{equation*} \max_{n\le x} f(n) \ge \log\log\log{x} + C \end{equation*}
for some $C > C_1-\frac{1}{2}$ and all large $x$. In the PhD thesis of the first author \cite{engberg14}, it is shown using lower bounds for counts of smooth shifted primes (cf. the discussion in \S\ref{sec:sharpmaxprelim} below) that $C$ can be taken as $C_1-\frac{1}{2}+ 0.2069$.

Our first theorem determines the  ``correct'' value of $C$, conditional on the Generalized Riemann Hypothesis (GRH) and (a weak form of) the Elliott--Halberstam Conjecture (EHC).\footnote{By GRH, we always mean the Riemann Hypothesis for Dedekind zeta functions. The relevant version of the Elliott--Halberstam Conjecture is stated precisely in \S\ref{sec:sharpmaxprelim}.}

\begin{thm}[conditional on GRH and weak EHC]\label{thm:main1}  As $x\to\infty$,
\[ \max_{n\le x} f(n) = \log\log\log{x} + C_1 -\frac{1}{2} + C_2 + o(1). \]
Here $C_1$ is the Meissel--Mertens constant and $C_2 = \int_{1}^{\infty} \rho(u) u^{-1}\, \mathrm{d}u$, where $\rho(u)$ is Dickman's function {\rm (}see \S\ref{sec:sharpmaxprelim}{\rm )}.
\end{thm}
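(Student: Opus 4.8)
\emph{Proof idea.} The estimate \eqref{eq:easylower} already contributes $\log\log\log x + C_1 - \tfrac12$; the new content is to gain the additional $C_2$ and to match it from above.

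\emph{Lower bound.} Let $z$ be largest with $\psi(z)\le\log x$ ($\psi$ the second Chebyshev function); then $z\sim\log x$, so $\log\log z=\log\log\log x+o(1)$, and $n:=\lcm(1,\dots,z)=e^{\psi(z)}\le x$. Since $\ell(p)\mid p-1$ always, $f(n)=\sum_{\ell(p)\mid n}1/p\ge\sum_{2<p,\ p-1\mid n}1/p$, and we split this sum at $z$ and at $z^{B}$ for an arbitrary fixed $B$. The range $p\le z$ contributes $\sum_{2<p\le z}1/p=\log\log\log x+C_1-\tfrac12+o(1)$ by Mertens. In the range $z<p\le z^{B}$, the divisibility $p-1\mid\lcm(1,\dots,z)$ differs from the condition $P^+(p-1)\le z$ (with $P^+$ the largest prime factor) only when some maximal prime power of $p-1$ exceeds $z$, and such $p$ contribute $\ll_{B}z^{-1/2}=o(1)$ to the reciprocal sum; so we may use $P^+(p-1)\le z$ instead. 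Feeding in the asymptotic $\#\{p\le t:P^+(p-1)\le z\}=(1+o(1))\,\rho(\log t/\log z)\,\pi(t)$, valid uniformly for $z\le t\le z^{B}$ on (weak) Elliott--Halberstam — this is exactly where that hypothesis enters, since unconditionally one can only take $B$ below an absolute constant whereas we must let $B\to\infty$ — partial summation converts the sum to $\int_1^{B}\rho(u)\,u^{-1}\,\mathrm du+o(1)$. Hence $\max_{n\le x}f(n)\ge\log\log\log x+C_1-\tfrac12+\int_1^{B}\rho(u)u^{-1}\mathrm du+o(1)$ for every $B$; letting $B\to\infty$ and using $\int_1^{\infty}\rho(u)u^{-1}\mathrm du=C_2<\infty$ gives the claimed lower bound.

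\emph{Upper bound.} Fix $n\le x$ and break $f(n)=\sum_{p\mid2^{n}-1}1/p$ into the ranges $p\le\log x$, $\log x<p\le x$, and $p>x$. From $\prod_{p\mid2^{n}-1}p\mid2^{n}-1<2^{x}$ we get $\sum_{p\mid2^{n}-1}\log p<x\log2$, so the last range contributes $<\log2/\log x=o(1)$; the first is $\le\sum_{2<p\le\log x}1/p=\log\log\log x+C_1-\tfrac12+o(1)$. Thus the theorem reduces to showing $\sum_{\log x<p\le x,\ \ell(p)\mid n}1/p\le C_2+o(1)$, uniformly in $n$. Here one passes to $\mathrm{ind}(p):=(p-1)/\ell(p)$ and uses the identity $\ell(p)\mid n\iff(p-1)/\gcd(p-1,n)\mid\mathrm{ind}(p)$: a prime contributing to the middle sum either has $p-1\mid n$, or has $\mathrm{ind}(p)$ divisible by the nontrivial integer $(p-1)/\gcd(p-1,n)$. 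On GRH the number of $p\le t$ with $k\mid\mathrm{ind}(p)$ — equivalently, with $p$ splitting completely in $\Q(\zeta_{k},2^{1/k})$ — is $\pi(t)/(k\varphi(k))+O(\sqrt t\,\log kt)$ uniformly in $k$ (up to the usual bounded correction factors), and summing this in the style of Hooley's treatment of Artin's conjecture shows that the primes of the second type contribute only $o(1)$. One is then left with $\sum_{\log x<p\le x,\ p-1\mid n}1/p$; this is largest over $n\le x$ (up to $o(1)$) precisely for $n=\lcm(1,\dots,z)$, and for that $n$ it equals $\int_1^{\infty}\rho(u)u^{-1}\mathrm du+o(1)=C_2+o(1)$, by the same count of smooth shifted primes used above. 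Adding the three ranges gives $f(n)\le\log\log\log x+C_1-\tfrac12+C_2+o(1)$.

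\emph{The main obstacle} is this middle range of the upper bound. Term-by-term estimates are hopelessly lossy: bounding $\sum_{\ell(p)=d}1/p$ through $\#\{p:\ell(p)=d\}\le\omega(\Phi_{d}(2))\le\varphi(d)\log2/\log(d+1)$ overshoots by roughly a factor $d/\log^{2}d$ — because $\Phi_{d}(2)$ typically has only about $\log d$ prime factors — and summing over the $\tau(n)=x^{o(1)}$ divisors of an extremal $n$ then diverges. It is only the GRH‑driven equidistribution of $\mathrm{ind}(p)$, expressing that primes of a given size have $\ell(p)$ large on average, that lets one see $\ell(p)\mid n$ behaves like $p-1\mid n$ up to $o(1)$ and that the extremal $n$ is the smooth one governed by $\rho$. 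Making the Chebotarev error uniform in the modulus $k$ (this is where GRH is indispensable), and proving that $\lcm(1,\dots,z)$ is essentially worst case for the shifted‑prime‑divisor sum, are the technical heart of the argument.
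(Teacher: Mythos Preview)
Your lower bound is essentially the paper's argument and is fine: take $n=\lcm(1,\dots,z)$, replace $p-1\mid n$ by $P(p-1)\le z$ up to a squarefull error $\ll z^{-1/2}\log z$, and convert $\sum_{p>z,\,P(p-1)\le z}1/p$ into $\int_1^B\rho(u)u^{-1}\,\mathrm{d}u$ by partial summation against the EHC asymptotic for $\Pi(t,z)$. (The paper additionally controls the tail $p>z^B$ via the unconditional Pomerance--Shparlinski bound before letting $B\to\infty$, but this is routine.)

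The upper bound, however, has a genuine gap. The dichotomy is correct --- if $\ell(p)\mid n$ then either $p-1\mid n$ or $k_p:=(p-1)/\gcd(p-1,n)\ge2$ divides $\mathrm{ind}(p)$ --- but the assertion that the second alternative contributes $o(1)$ ``by summing the GRH count $\pi(t)/(k\varphi(k))$ over $k$ in the style of Hooley'' is false in general. Take $n=\prod_{2<q\le z}q$ with $z\sim\log x$. Since $n$ is odd, $p-1\nmid n$ for every odd $p$, so the \emph{entire} middle-range sum $\sum_{\log x<p\le x,\,\ell(p)\mid n}1/p$ is of the second type; yet this sum is $\gg1$ (e.g.\ primes $p\equiv7\pmod8$ in $(z,z^2]$ with $(p-1)/2$ squarefree and $z$-smooth satisfy $\ell(p)\mid(p-1)/2\mid n$ and already contribute a positive amount). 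For such primes $k_p=2$, and the GRH density of $\{p:2\mid\mathrm{ind}(p)\}$ is a positive constant, so any sum over $k\ge2$ of the Chebotarev main terms is of order $\log\log x$, not $o(1)$. Hooley's method works for Artin because there one is isolating the density-zero complement of $\mathrm{ind}(p)=1$; here a single bounded value of $k_p$ carries positive density, so the method cannot give $o(1)$.

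The paper avoids this by \emph{not} reducing to $p-1\mid n$. Under GRH, the Kurlberg--Pomerance estimate $\#\{p\le X:\ell(p)\le p/Y\}\ll X/(Y\log X)+X\log\log X/(\log X)^2$ shows that primes with $\mathrm{ind}(p)>(\log\log p)^2$ have convergent reciprocal sum; for the remaining ``normal'' primes, $\ell(p)\mid n$ forces $p-1=\ell(p)\cdot\mathrm{ind}(p)\mid nL$ with $L:=\lcm(1,\dots,(\log\log x)^2)$. This auxiliary factor $L$, absorbing the bounded index, is exactly what your argument is missing. From $f(n)\le\sum_{p-1\mid nL}1/p+o(1)$ the paper does not argue that the lcm is extremal (your other unjustified step); it bounds the sum uniformly in $n\le x$ by splitting on whether $P(p-1)\le z:=\log x\cdot\log\log x$. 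The $z$-smooth part is independent of $n$ and equals the main term by the same smooth-shifted-prime asymptotics, while the rough part satisfies $\sum_{p-1\mid nL,\,P(p-1)>z}1/p\le(\sigma(nL)/nL)\cdot\omega(n)/z\ll1/\log\log x$ via the standard maximal orders of $\sigma$ and $\omega$.
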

\noindent One can compute that $C_2= 0.521908\dots$ and that $C_1 -\frac{1}{2}+C_2 = 0.283405\dots$.

Since $\sum_{d\mid m}\frac{1}{d} \asymp \exp(\sum_{p\mid m}\frac{1}{p})$ for all positive integers $m$, the estimate \eqref{eq:erdosfupper} is equivalent to the bound
\[ \max_{n \le x} F(n) \ll \log\log{x}, \]
and this is how Erd\H{o}s advertises the result in \cite{erdos71}. Our proof of Theorem \ref{thm:main1} is easily adapted to determine, under the same hypotheses as Theorem \ref{thm:main1}, $\max_{n\le x} F(n)$ to within a factor of $1+o(1)$.

\begin{thm}[conditional on GRH and weak EHC]\label{thm:main1point5}  As $x\to\infty$,
\[ \max_{n\le x} F(n) = \exp(\gamma -\log{2} + C_2 + o(1)) \log\log{x}. \]
Here $\gamma$ is the familiar Euler--Mascheroni constant.
\end{thm}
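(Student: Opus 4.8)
The plan is to reduce everything to Theorem~\ref{thm:main1} via the identity $F(n)=\sigma(2^n-1)/(2^n-1)=\prod_{p^a\| 2^n-1}\sigma(p^a)/p^a$ and the exact factorization $\log\frac{\sigma(p^a)}{p^a}=\log\frac{1}{1-1/p}+\log(1-p^{-a-1})$. Summing over $p\mid 2^n-1$ and writing $v_p=v_p(2^n-1)$, this gives
\[ \log F(n) = f(n) + T(n) - E(n), \]
where $T(n)=\sum_{p\mid 2^n-1}\bigl(\log\tfrac{1}{1-1/p}-\tfrac1p\bigr)\ge 0$ and $E(n)=\sum_{p\mid 2^n-1}\bigl(-\log(1-p^{-v_p-1})\bigr)\ge 0$, the latter satisfying $E(n)\ll\sum_{p\mid 2^n-1}p^{-v_p-1}$. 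Because $2^n-1$ is odd and the summands of $T$ are positive, Mertens' theorems give the clean bound $0\le T(n)\le T^{\ast}$, where $T^\ast:=\sum_{p>2}\bigl(\log\tfrac{1}{1-1/p}-\tfrac1p\bigr)=\gamma-C_1-\log 2+\tfrac12$ (using $\sum_{p}\bigl(\log\tfrac{1}{1-1/p}-\tfrac1p\bigr)=\gamma-C_1$).

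The upper bound then drops out at once: for any $n\le x$ we have $\log F(n)\le f(n)+T^\ast\le\max_{m\le x}f(m)+T^\ast$, and Theorem~\ref{thm:main1} turns the right side into $\log\log\log x+C_1-\tfrac12+C_2+(\gamma-C_1-\log 2+\tfrac12)+o(1)=\log\log\log x+\gamma-\log 2+C_2+o(1)$, i.e.\ $F(n)\le\exp(\gamma-\log 2+C_2+o(1))\log\log x$.

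For the lower bound I would take the $n$ used to prove the lower half of Theorem~\ref{thm:main1} — concretely $n=\lcm[1,\dots,z]$ with $z=z(x)\to\infty$ chosen (as there) so that $n\le x$ and $f(n)\ge\log\log\log x+C_1-\tfrac12+C_2-o(1)$ — and verify that this same $n$ also pins down $T(n)$ and $E(n)$. Since every odd prime $p\le z$ has $\ell(p)\le p-1<z$, hence $\ell(p)\mid n$ and $p\mid 2^n-1$, we get $T(n)\ge\sum_{2<p\le z}\bigl(\log\tfrac{1}{1-1/p}-\tfrac1p\bigr)=T^\ast-o(1)$, so $T(n)=T^\ast+o(1)$. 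For $E(n)$, fix a slowly growing $w=w(x)\to\infty$, say $w=\log z$: for $p>w$ use $\sum_{p>w}p^{-v_p-1}\le\sum_{p>w}p^{-2}=o(1)$, while for $p\le w$ note that $\ell(p^k)\mid p^{k-1}\ell(p)$ always, so $\ell(p^k)\mid\lcm[1,\dots,z]$ whenever $p^{k-1}\le z$, giving $v_p\ge 1+\lfloor\log z/\log p\rfloor\ge 1+\lfloor\log z/\log w\rfloor=:V\to\infty$ and hence $\sum_{p\le w}p^{-v_p-1}\le w\cdot 2^{-V}=o(1)$. Therefore $\log F(n)=f(n)+T^\ast+o(1)=\log\log\log x+\gamma-\log 2+C_2+o(1)$, matching the upper bound. (If the construction behind Theorem~\ref{thm:main1} uses some multiple of $\lcm[1,\dots,z]$ instead, the same argument applies since $2^n-1$ is still divisible by the small primes to these high powers.)

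The only point requiring care rather than bookkeeping is $E(n)=o(1)$: this is exactly the assertion that the extremal $n=\lcm[1,\dots,z]$ forces $2^n-1$ to carry \emph{high} powers of the small primes, so that the local factors $\sigma(p^{v_p})/p^{v_p}$ reach size $(1-1/p)^{-1}$ rather than merely $1+1/p$; dropping it would replace the correct constant $e^{\gamma-\log 2}e^{C_2}$ by $4\pi^{-2}e^{\gamma}e^{C_2}$. No special treatment of Wieferich primes is needed, since $\ell(p^k)\mid p^{k-1}\ell(p)$ regardless. Everything else — the Mertens-type estimates, the convergence of $\sum_p(\log\tfrac{1}{1-1/p}-\tfrac1p)$, and the $o(1)$ tail of $T$ — is routine, with the genuine depth absorbed entirely into Theorem~\ref{thm:main1}.
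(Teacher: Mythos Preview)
Your argument is correct. The paper follows an essentially equivalent path but packages it differently: instead of writing $\log F(n)=f(n)+T(n)-E(n)$ and invoking Theorem~\ref{thm:main1} as a black box, the paper first records a variant of Theorem~\ref{thm:main1} (Proposition~\ref{prop:main1variant}) in which $1/p$ is replaced by $\log\frac{p}{p-1}$, and then deduces both inequalities from that. For the lower bound the paper explicitly exhibits the divisors $d\mid 2^n-1$ built from high prime powers $p^{e_p}$ with $2<p\le\log\log x$, $p^{e_p}\le\tfrac12\log x$, together with single primes $q>\log\log x$ having $\ell(q)\mid n$; the resulting error factors $\prod(1-p^{-e_p-1})$ and $\prod(1-q^{-2})$ are shown to be $1+o(1)$, which is exactly what your estimate $E(n)=o(1)$ encodes. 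Your reduction is arguably the more economical of the two, since it uses Theorem~\ref{thm:main1} itself rather than reproving its $\log\frac{p}{p-1}$ analogue; the paper's formulation, on the other hand, makes the link to Mertens' product theorem a single step.
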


Observe that $F(n) = \sum_{m \mid n} E(m)$, where $E(m) = \sum_{\ell(d) = m} 1/d$. Erd\H{o}s showed in \cite{erdos51} (see that paper's eq.\ (8)) that $\sum_{m\le x} E(m) \ll \log{x}$, as a part of a simple (re)proof of Romanov's theorem asserting that $\sum_{d\text{ odd}} 1/d\ell(d) < \infty$. Several questions on the average and extremal orders of $E(m)$ are left open in \cite{erdos51}; many of these were later taken up by Pomerance in \cite{pomerance86}. Our concern here is with the average order. Clearly,
\begin{equation}\label{eq:trivEmlower} \sum_{m \le x} E(m) = \sum_{\ell(d)\le x} \frac{1}{d} \ge \sum_{\substack{d \text{ odd} \\ d\le x}} \frac{1}{d} > \frac{1}{2} \log{x}, \end{equation}
and it was conjectured in \cite{pomerance86} that this is asymptotically sharp. Our next theorem shows that this conjecture follows from GRH.

\begin{thm}[conditional on GRH]\label{thm:1andthreequarters} As $x\to\infty$,
\[ \sum_{m \le x} E(m) = \left(\frac{1}{2}+o(1)\right) \log{x}. \]
\end{thm}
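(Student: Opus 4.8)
The lower bound is already available: by \eqref{eq:trivEmlower}, $\sum_{m\le x}E(m)\ge\sum_{d\le x,\ d\text{ odd}}1/d=\tfrac12\log x+O(1)$. Everything therefore lies in the matching upper bound. Since each odd $d$ is counted in $\sum_{m\le x}E(m)$ exactly once, through the term $m=\ell(d)$, we have $\sum_{m\le x}E(m)=\sum_{\ell(d)\le x}1/d$, and splitting by the size of $d$,
\[ \sum_{m\le x}E(m)=\sum_{\substack{d\le x\\ d\text{ odd}}}\frac1d+\sum_{\substack{d>x\\ \ell(d)\le x}}\frac1d=\tfrac12\log x+O(1)+\Sigma(x),\qquad \Sigma(x):=\sum_{\substack{d>x\\ \ell(d)\le x}}\frac1d. \]
Erd\H{o}s's bound \eqref{eq:erdosfupper} already gives $\Sigma(x)=O(\log x)$; what is needed is the improvement $\Sigma(x)=o(\log x)$. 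It suffices to prove, for each fixed $\eta>0$, that $\sum_{d>x^{1+\eta},\ \ell(d)\le x}1/d\to0$ as $x\to\infty$, since the remaining range contributes at most $\sum_{x<d\le x^{1+\eta}}1/d\le\eta\log x$, so that $\limsup_{x\to\infty}\Sigma(x)/\log x\le\eta$ for every $\eta>0$.

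The plan for the ``exotic'' numbers $d$ with $\ell(d)=m\le x$ but $d>x^{1+\eta}$ is to show that they are too sparse to contribute. One first records their structure. If $p^a\parallel d$ with $a\ge2$, then $p^a\mid2^m-1$ with $\ell(p)\mid m$, and the lifting-the-exponent lemma determines $v_p(2^m-1)$; for $p$ not a Wieferich prime (the Wieferich primes produce only a routine correction, bounded by a small multiple of $\Sigma(x)$ plus $O(1)$, which is absorbed) this forces $p^{a-1}\mid m$. Hence $d/\rad(d)\mid m$, so $\rad(d)\ge d/m>x^{\eta}$, and moreover $\ell(d)=\lcm\bigl(\ell(\rad(d)),\,d/\rad(d)\bigr)$. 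Thus every exotic $d$ has the shape $d=rs$ with $r=\rad(d)$ squarefree, $\ell(r)\le x$, $r>x^{\eta}$, and $s=d/r$ dividing $\ell(d)\le x$ and supported on the primes of $r$; the reciprocal sum over exotic $d$ is then bounded by a sum over such large squarefree $r$, carrying a manageable weight for the powerful part $s$. Squarefree $r$ with $\ell(r)$ small must be products of primes of small multiplicative order, and these are scarce: for each $e\mid m$ the primes $p$ with $\ell(p)\mid e$ are precisely those splitting completely in the Kummer extension $\mathbb{Q}(\zeta_e,2^{1/e})$, so GRH (the Riemann Hypothesis for the relevant Dedekind zeta functions), via the effective Chebotarev density theorem, yields sharp upper bounds for $\#\{p\le y:\ell(p)=m\}$. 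Feeding these in — organizing the sum over $r$ by $\omega(r)$, applying a Rankin-type device to exploit the constraint $r>x^{\eta}$, and splitting the range of $p$ at an appropriate power of $x$ — should drive the exotic sum to $o(1)$.

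The main obstacle is precisely this last step: making the estimates uniform over the whole range $m\le x$ of orders. An exotic $d$ can be as large as $2^{x}$, so the primes it is built from can be enormous, whereas the error term in effective Chebotarev for $\mathbb{Q}(\zeta_e,2^{1/e})$ grows polynomially in $e$ and is beaten by the main term only once the range of $p$ exceeds a sufficiently large fixed power of $x$. One must therefore apply GRH only up to a cutoff $y=x^{A}$ — with $A$ large enough that the error terms summed over all $m\le x$ remain negligible — and control the primes, and hence the exotic $d$, beyond $x^{A}$ by a separate and more robust argument in the spirit of Romanov's theorem. Carrying out this division of labor, and keeping the passage from $d$ to its squarefree kernel $\rad(d)$ essentially lossless, is where the substance of the proof lies; the Wieferich correction and the elementary range $x<d\le x^{1+\eta}$ are cosmetic.
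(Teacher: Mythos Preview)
Your reduction is correct: the lower bound is \eqref{eq:trivEmlower}, and the upper bound amounts to showing $\Sigma(x)=\sum_{d>x,\,\ell(d)\le x}1/d=o(\log x)$. But what follows is a plan, not a proof. You explicitly write that the Rankin/Chebotarev step ``should drive the exotic sum to $o(1)$'' and that ``carrying out this division of labor \dots\ is where the substance of the proof lies''---and then you do not carry it out. Two of the pieces you wave away are genuinely nontrivial. First, the Wieferich correction: your structural claim $d/\rad(d)\mid m$ needs $v_p(2^{\ell(p)}-1)=1$, and absent any bound on the number or ``depth'' of Wieferich primes there is no reason the correction is ``a small multiple of $\Sigma(x)$ plus $O(1)$''. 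Second, you set yourself the target $\sum_{d>x^{1+\eta},\,\ell(d)\le x}1/d\to 0$, which is stronger than required and is not what the paper establishes; the effective Chebotarev bounds for $\mathbb{Q}(\zeta_e,2^{1/e})$, summed over all $e\mid m$ and all $m\le x$, are delicate to make uniform, and you give no indication of how the ``separate and more robust argument in the spirit of Romanov's theorem'' would handle $d$ beyond $x^A$.

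The paper's proof avoids all of this by quoting two ready-made results and splitting $\Sigma(x)$ into three ranges. The range $x<d\le x^{1+\epsilon/2}$ is trivial. For $x^{1+\epsilon/2}<d\le x^3$ one invokes Kurlberg's GRH theorem (Proposition~\ref{prop:kurlberg}) that $\ell(d)>d^{1-\epsilon}$ off a set of density~$0$, giving $o(\log x)$ by partial summation---note this is only $o(\log x)$, not $o(1)$. For $d>x^3$ one uses the unconditional Lemma~\ref{lem:finiterecipsum} (built from Propositions~\ref{prop:FPS} and~\ref{prop:kplambdaell}) that $\ell(d)>d^{1/2-\delta}$ holds with at most $O(X/(\log X)^2)$ exceptions up to $X$, so the relevant $d$ have finite reciprocal sum. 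Your Chebotarev sketch is essentially a proposal to reprove Proposition~\ref{prop:kurlberg} inline; citing it, together with a suitable tail estimate like Lemma~\ref{lem:finiterecipsum}, would turn your outline into an actual proof with no need for the Wieferich analysis, the Rankin device, or the unspecified Romanov-type argument.
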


\noindent As we remark after its proof, Theorem \ref{thm:1andthreequarters} allows us to (conditionally) answer a question of Murty, Rosen, and Silverman concerning the behavior of $\sum_{d\text{ odd}} 1/d\ell(d)^{\epsilon}$ as $\epsilon \downarrow 0$.

Our final result belongs to the domain of probabilistic number theory. Recall that a function $D\colon \R\to\R$ is a \textsf{distribution function} if $D$ is nondecreasing, right-continuous, and obeys the boundary conditions
\[ \lim_{c\to-\infty} D(c) =0 \quad\text{and}\quad \lim_{c\to\infty} D(c) = 1. \]
If $g(n)$ is a real-valued arithmetic function, we say that $g(n)$ \textsf{has a distribution function} (or \textsf{possesses a limit law}) if there is a distribution function $D$ with the following property: For every real number $c$ at which $D$ is continuous, the set of natural numbers $n$ with $g(n) \le c$ has asymptotic density $D(c)$. It is easy to show that if $g(n)$ has a distribution function $D$, then $D$ is uniquely determined. If this $D$ is continuous everywhere, we say that \textsf{$g$ has a continuous distribution function}.

On p.\ 270 of \cite{EKP91}, Erd\H{o}s, Kiss, and Pomerance claim that it can be proved by ``more or less standard methods'' that $f(n)$ possesses a continuous distribution function. That $f(n)$ and $F(n)$ have distribution functions follows immediately from (later) general results of Luca and Shparlinski \cite{LS07}, who used the method of moments. (See the proof of Theorem 3(2) in \cite{luca05} for an alternative approach to these results.) However, we are not aware of any proof in the literature --- before \cite{EKP91} or since --- that the distribution function of $f$ is continuous.\footnote{Related distribution functions are claimed to be continuous in \cite[Theorem 3(2)]{luca05}. However, the offered proofs only show existence of the distribution functions, leaving continuity unaddressed.}  As the details of the proof seem to us both interesting and nontrivial, we have made our second goal with this note to fill this (much-unneeded) gap in the literature.

\begin{thm}\label{thm:main2} The distribution function of $f(n)$ is continuous. The same holds for $F(n)$.
\end{thm}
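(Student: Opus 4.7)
The plan is to prove directly that for every $c \in \R$,
\[
\overline{d}\{n : f(n) = c\} = 0.
\]
Since $f$ already possesses a distribution function $D_f$ (by Luca--Shparlinski), this density equals the jump $D_f(c) - D_f(c^-)$, so its vanishing for all $c$ is precisely the continuity of $D_f$. The algebraic cornerstone is an injectivity lemma: the map $A \mapsto \sum_{p \in A} 1/p$ is one-to-one on finite subsets $A$ of the primes. Writing $\sum_{p \in A} 1/p = N_A/\prod_{p \in A} p$ with $N_A = \sum_{p \in A}\prod_{p' \in A \setminus \{p\}}p'$, one has $N_A \equiv \prod_{p' \ne p} p' \not\equiv 0 \pmod{p}$ for every $p \in A$; hence $\prod_{p \in A} p$ is the denominator in lowest terms, and unique factorization recovers $A$.

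In particular $f(n) = c$ uniquely determines the set $A_c = \{p : \ell(p) \mid n\}$ of primes dividing $2^n-1$, or else the fiber is empty; so it suffices to prove $\overline{d}\{n : \{p : \ell(p) \mid n\} = A\} = 0$ for every finite set $A$ of odd primes. This fiber lies inside $\{n : \ell(p) \nmid n \text{ for all odd } p \notin A\}$. By the classical theorem of Bang, $2^q - 1$ has a primitive prime divisor $p_q$ (with $\ell(p_q) = q$) for every prime $q$; since $A$ is finite, $p_q \notin A$ for all $q$ in some cofinite set $Q$ of primes. For these $q$, the condition $\ell(p_q) \nmid n$ reduces to $q \nmid n$, so the fiber is contained in $\{n : q \nmid n \text{ for all } q \in Q\}$. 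Applying the Chinese Remainder Theorem to finite truncations of $Q$,
\[
\overline{d}\{n : f(n) = c\} \le \prod_{q \in Q}(1 - 1/q) = 0,
\]
the product vanishing because $\sum_q 1/q$ diverges.

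The adaptation to $F$ follows the same skeleton, but the injectivity step fails: for instance, the perfect numbers $6$ and $28$ satisfy $\sigma_{-1}(6) = \sigma_{-1}(28) = 2$, so $F(n) = v$ no longer pins down the prime-power factorization of $2^n - 1$. The \textbf{main obstacle} is handling this multiplicity. I would decompose $\{n : F(n) = v\}$ as a union over tuples $(A, (a_p)_{p \in A})$ satisfying $\prod_{p \in A} \sigma_{-1}(p^{a_p}) = v$, apply the Bang--Zsygmondy density estimate above to the fiber $\{n : v_p(2^n-1)=a_p\ (p \in A),\ p' \nmid 2^n-1\ (p'\notin A)\}$ corresponding to each tuple, and then bound the number of tuples that can be realized as $2^n - 1$ with $n \le N$. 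This last step is the delicate one: it requires using the sparsity $|\{2^n - 1 : n \le N\}| = N \ll 2^N$ together with a bound on $|\{m \le 2^N : \sigma_{-1}(m) = v\}|$ to conclude that only $o(N)$ tuples can contribute --- a much more involved counting problem than the one encountered for $f$.
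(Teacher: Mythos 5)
Your argument begins with the claim that, because $f$ has a distribution function $D_f$, the jump $D_f(c)-D_f(c^-)$ equals the upper density of the level set $\{n : f(n)=c\}$. This is false in general, and it is exactly the point where the proposal breaks. One always has $\overline{d}\{n : f(n)=c\} \le D_f(c)-D_f(c^-)$, but the reverse inequality can fail: the mass producing the jump may accumulate \emph{near} $c$ without ever landing on $c$. A concrete illustration: for $g(n) = 1 - 1/n$, the distribution function is $D(c)=0$ for $c<1$ and $D(c)=1$ for $c\ge 1$, so there is a jump of size $1$ at $c=1$, yet $\{n : g(n)=1\}=\emptyset$. To prove continuity you must control the density of $\{n : f(n) \in (\alpha-\epsilon,\alpha+\epsilon)\}$ for small $\epsilon$, not just the single fiber. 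Your injectivity lemma (that $A\mapsto \sum_{p\in A}1/p$ is injective on finite sets of primes — which is correct, and a nice observation) is an inherently exact statement and gives no purchase on the interval version: two different $n$ with disjoint prime-sets $A_n$ can have $f$-values arbitrarily close.

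This is precisely the difficulty the paper's argument is designed to overcome. Rather than trying to pin down $f(n)$ exactly, the paper supposes a jump of size $\delta$ at $\alpha$, deduces that $\{n : f(n)\in(\alpha-\epsilon,\alpha+\epsilon)\}$ has upper logarithmic density $\ge \delta/2$ (after discarding the small set with large squarefull part), and then invokes the Pomerance--S\'ark\"ozy theorem to produce $a\mid a'$ in this set with $a'/a \ll_\delta 1$. Bang's theorem, applied to a prime power $q^e$ dividing $a'$ but not $a$, then forces $f(a')-f(a)\gg_\delta 1$, a quantitative \emph{lower} bound on the separation that contradicts $|f(a')-f(a)|<2\epsilon$ for $\epsilon$ small. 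So it is the combination of a divisibility relation $a\mid a'$ with a bounded ratio, extracted from a positive-density set via Pomerance--S\'ark\"ozy, that replaces your injectivity lemma; that combination is robust to perturbation in a way your exact computation of $A_c$ is not. (Your observation does correctly show that every level set of $f$ has density zero — a true statement, and pleasant in its own right — but it is strictly weaker than continuity of $D_f$.) The portion of the proposal addressing $F$ is, as you say yourself, only a sketch with the main counting step unresolved, so I will not comment on it beyond noting that it inherits the same foundational gap.
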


%\noindent By contrast, it is shown in \cite{pomerance86} that $E(m)$ does not possess a distribution function.

The reader interested in these problems may also wish to refer to \cite{murata04}, which describes another application of the GRH to the study of prime divisors of Mersenne numbers.

\section{The sharp maximal order of $f(n)$}\label{sec:sharpmax}
\subsection{Preliminaries on smooth shifted primes}\label{sec:sharpmaxprelim} Our proofs rely heavily on the notion of ``smoothness''. Let $P(n)$ denote the largest prime factor of $n$, with the convention that $P(1)=1$. The natural number $n$ is called \textsf{$Y$-smooth} (or \textsf{$Y$-friable}) if $P(n)\le Y$. For $X,Y\ge 1$, we let
\[ \Psi(X,Y) = \#\{n\le X: P(n)\le Y\}. \]
After work of de Bruijn, Hildebrand, Tenenbaum, and others, we have precise estimates for $\Psi(X,Y)$ in a wide range of $X$ and $Y$; a useful survey is \cite{granville08}. A fundamental role in this area is played by the \textsf{Dickman function} $\rho(u)$, defined as the solution to the delay differential equation
\[ u \rho'(u) + \rho(u-1) = 0 \]
with the initial condition  $\rho(u)=1$ for $0 \le u\le 1$. Hildebrand, extending earlier work of de Bruijn, showed that with $u:=\frac{\log{X}}{\log{Y}}$,
$$\Psi(X,Y) \sim X\rho(u), \qquad \text{as $X\to\infty$ with $X\ge Y > \exp((\log\log{X})^{1.7})$.}$$
(In fact, $1.7$ may be replaced with any constant exceeding $5/3$.) Given the close connection between $\Psi(x,y)$ and $\rho(u)$ it is important to have accurate estimates of $\rho(u)$. While much finer results are known, the crude estimate
\[ \rho(u) = 1/u^{u+o(u)}, \qquad\text{as $u\to\infty$}, \]
will suffice for all of our purposes.

In the arguments below, it is important to understand the frequency with which shifted primes $p-1$ are smooth. For $X, Y \ge 1$, set
\[ \Pi(X, Y) = \#\{p\le X: P(p-1) \le Y\}. \]
It is natural to expect that shifted primes are smooth with the same relative frequency as generic numbers of the same size. This heuristic, coupled with the prime number theorem, suggests that
\begin{equation}\label{eq:pomconjecture} \Pi(X,Y) \sim \frac{\Psi(X,Y)}{\log{X}} \end{equation}
in a wide range of $X$ and $Y$\!. An explicit conjecture of this kind appears in \cite{pomerance80}, where Pomerance proposes that \eqref{eq:pomconjecture} holds whenever $X,Y\to\infty$ with $X\ge Y$.

We require two results which represent partial progress towards Pomerance's conjecture. The first is conditional on the following weak form of the Elliott--Halberstam Conjecture.
\begin{conj}[weak Elliott--Halberstam Conjecture] For any fixed $\epsilon>0$, we have
$$\sum_{n\le x^{1-\epsilon}} \max_{\gcd(a,n)=1}\left|\pi(x;n,a)-\frac{\pi(x)}{\phi(n)}\right| = o(\pi(x)), \qquad \text{as  $x\to\infty$}.$$
\end{conj}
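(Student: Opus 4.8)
Strictly speaking there is nothing here to prove: the displayed statement is a \emph{conjecture}, a weakening of the classical Elliott--Halberstam conjecture in which one demands only the error term $o(\pi(x))$, rather than a saving by an arbitrary power of $\log x$, and only at the single level $x^{1-\epsilon}$. A ``proof proposal'' can therefore do no more than recall which part of the statement is a theorem and pinpoint the barrier that keeps the rest out of reach.

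The natural point of departure is the Bombieri--Vinogradov theorem, which yields exactly the displayed inequality --- in fact with the far stronger right-hand side $O_A(x/(\log x)^A)$ --- once $x^{1-\epsilon}$ is replaced by $x^{1/2-\epsilon}$. One would follow the standard script: expand $\Lambda(m)$ by a combinatorial identity of Vaughan or Heath--Brown into ``Type I'' sums (one variable short) and bilinear ``Type II'' sums; rewrite counting in progressions modulo $n$ in terms of Dirichlet characters; and, for each dyadic range of moduli, control the mean square of $\pi(x;n,a)-\pi(x)/\phi(n)$ over $n\le Q$ by the multiplicative large sieve inequality
\[ \sum_{n\le Q}\frac{n}{\phi(n)}\sum_{\substack{\chi\bmod n\\ \chi\ \mathrm{primitive}}}\Bigl|\sum_{m\le x}a_m\chi(m)\Bigr|^2\ll(Q^2+x)\sum_{m\le x}|a_m|^2. \]
The maximum over $a$ costs essentially nothing here, since it can be handled by one further Cauchy--Schwarz once one only insists on an $o(\pi(x))$ bound; the genuine constraint is that balancing the Type I and Type II contributions pins $Q$ down to $x^{1/2-\epsilon}$.

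The hard step --- and, at the present state of knowledge, an insurmountable one --- is to push the level of distribution from $x^{1/2}$ all the way to $x^{1-\epsilon}$, that is, to break the $\sqrt{x}$ barrier intrinsic to the large sieve. The only known avenue is the dispersion method of Linnik and Bombieri--Friedlander--Iwaniec, in which one forfeits full uniformity by building in extra averaging --- over the residue class $a$, over well-factorable moduli, or by fixing $a$ outright --- and then estimates the resulting incomplete Kloosterman and Ramanujan sums via the Weil--Deligne bounds together with the Deshouillers--Iwaniec spectral theory of automorphic forms. No version of this machinery is known to reach the sum displayed above, which keeps the maximum over \emph{all} $a$ coprime to $n$ with $n$ running up to $x^{1-\epsilon}$; the strongest unconditional results in this circle of ideas (Bombieri--Friedlander--Iwaniec, Zhang and the Polymath8 project, Maynard) attain a level just beyond $1/2$ only under exactly such restrictions. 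Hence the statement must be assumed rather than proved; all one can assert unconditionally is that it follows immediately from the full Elliott--Halberstam conjecture, so that every theorem of this paper that leans on it is a genuine conditional theorem.
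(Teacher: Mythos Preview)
Your assessment is correct: the displayed statement is a \emph{conjecture}, and the paper offers no proof of it whatsoever---it is stated as a standing hypothesis under which Proposition \ref{prop:granville} (and hence Theorems \ref{thm:main1} and \ref{thm:main1point5}) are derived. Your exposition of what Bombieri--Vinogradov gives and where the $\sqrt{x}$ barrier lies is accurate and appropriate context, but none of it is needed to match the paper, which simply records the conjecture and moves on.
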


\begin{prop}[conditional on weak EHC]\label{prop:granville} Fix a real number $U > 1$. As $X\to\infty$,
\[ \Pi(X,Y) \sim \rho(u) \frac{X}{\log{X}}, \]
uniformly in the range $1 \le u \le U$.
\end{prop}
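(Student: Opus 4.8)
The plan is to estimate the complementary quantity $\pi(X)-\Pi(X,Y)=\#\{p\le X:P(p-1)>Y\}$, where $Y=X^{1/u}$ with $1\le u\le U$. Every prime factor of $p-1$ exceeding $Y$ is $>X^{1/U}$, and the product of all of them divides $p-1<X$, so $p-1$ has fewer than $u\le U$ such prime factors, hence at most $K:=\lfloor U\rfloor$; consequently inclusion--exclusion over the primes $q\in(Y,X]$ dividing $p-1$ is a \emph{finite} identity, and summing over $p$ gives, exactly,
\[
\pi(X)-\Pi(X,Y)=\sum_{j=1}^{K}(-1)^{j-1}\sum_{Y<q_1<\cdots<q_j}\pi(X;q_1\cdots q_j,1),
\]
the $q_i$ ranging over primes and each inner summand vanishing once $q_1\cdots q_j\ge X$.

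Fix a small $\epsilon>0$ (to be sent to $0$ at the end) and split each term according to whether $n:=q_1\cdots q_j$ satisfies $n\le X^{1-\epsilon}$ or $n>X^{1-\epsilon}$. When $n\le X^{1-\epsilon}$ the weak EHC lets me replace $\pi(X;n,1)$ by $\pi(X)/\phi(n)$; since every $n$ occurring here is squarefree with least prime factor $>Y$ and so arises for exactly one value of $j$, the combined error is at most $\sum_{n\le X^{1-\epsilon}}\max_{\gcd(a,n)=1}|\pi(X;n,a)-\pi(X)/\phi(n)|=o(\pi(X))$. The range $n>X^{1-\epsilon}$ is the crux. Write $q_j$ for the largest prime factor of $n$ and set $a=(p-1)/q_j$; then $q_j>Y\ge X^{1/U}$ forces $a<X^{1-1/U}$. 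Viewing $q_j$ as the free variable and applying a standard upper-bound sieve to the two linear forms $q_j$ and $aq_j+1\,(=p)$ — \emph{both} of which must be prime — yields $\#\{q_j\le X/a:q_j,\ aq_j+1\ \text{prime}\}\ll_U\frac{a}{\phi(a)}\cdot\frac{X}{a\log^2X}=\frac{X}{\phi(a)\log^2X}$, the essential gain being the \emph{second} factor $\log X$ coming from the primality of $q_j$. Summing this over the admissible $a$ (each a product of some $k<X^{\epsilon}$ with $j-1$ primes exceeding $Y$, so that $\sum_a1/\phi(a)\ll_U\epsilon\log X$) and over $j\le K$ bounds the whole $n>X^{1-\epsilon}$ contribution by $O_U(\epsilon)\,\pi(X)$.

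It remains to evaluate the surviving main term $\pi(X)\sum_{j=1}^{K}(-1)^{j-1}\sum_{q_1\cdots q_j\le X^{1-\epsilon}}1/\phi(q_1\cdots q_j)$. Replacing $1/\phi(q_1\cdots q_j)$ by $1/(q_1\cdots q_j)$ changes it by only $o(\pi(X))$, and the resulting alternating sum over tuples is exactly $1-\sum_{d\le X^{1-\epsilon},\,P^{-}(d)>Y}\mu(d)/d$, where $P^{-}$ denotes least prime factor. A Buchstab-type recursion — split $d=P^{-}(d)\cdot m$ with $P^{-}(m)>P^{-}(d)$ — shows that $\sum_{d\le T,\,P^{-}(d)>Y}\mu(d)/d$, as $X\to\infty$ with $\log T/\log Y$ bounded, converges to the function satisfying the same delay-differential equation and initial condition as $\rho$, hence to $\rho(\log T/\log Y)$; with $T=X^{1-\epsilon}$ this equals $\rho((1-\epsilon)u)=\rho(u)+O(\epsilon)$ uniformly, since $|\rho'|\le1$ on $[1,U]$. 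Combining the three contributions gives $\pi(X)-\Pi(X,Y)=\pi(X)\bigl(1-\rho(u)+O_U(\epsilon)+o(1)\bigr)$ uniformly for $u\in[1,U]$, so $\Pi(X,Y)=\pi(X)\bigl(\rho(u)+O_U(\epsilon)+o(1)\bigr)$; letting $\epsilon\downarrow0$ and invoking $\pi(X)\sim X/\log X$ together with $\rho(u)\ge\rho(U)>0$ yields $\Pi(X,Y)\sim\rho(u)\,X/\log X$ uniformly in $1\le u\le U$. The one genuinely delicate point is the sieve bound for $n>X^{1-\epsilon}$: the weak EHC says nothing there, and the naive Brun--Titchmarsh estimate $\pi(X;n,1)\ll X/(\phi(n)\log X)$ is off by exactly one logarithm — just enough to make the leftover sum of size $\epsilon X$ rather than $\epsilon\,\pi(X)$ — so one must genuinely exploit that both $q_j$ and $p$ are prime.
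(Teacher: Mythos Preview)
The paper does not supply its own proof of this proposition; it attributes the result to Granville (unpublished) and points to \cite[Theorem~1]{lamzouri07} and \cite[Lemma~4.1]{wang18} for complete arguments. Your sketch follows what is essentially the standard route taken in those references: finite inclusion--exclusion over the primes $q>Y$ dividing $p-1$ (legitimate because $u\le U$ bounds their number by $\lfloor U\rfloor$), weak EHC to handle the moduli $n\le X^{1-\epsilon}$, and an upper-bound sieve on the pair of linear forms $(q_j,\,aq_j+1)$ for the tail $n>X^{1-\epsilon}$ --- the second logarithm coming from the primality of $q_j$ being exactly what rescues the estimate from the Brun--Titchmarsh shortfall you correctly flag. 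The identification of the main term via $\sum_{d\le T,\,P^-(d)>Y}\mu(d)/d\to\rho(\log T/\log Y)$ is the standard M\"obius rewriting of $\Psi(T,Y)/T\to\rho(v)$.

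The argument is correct. Two small points would deserve a line each in a full write-up: (i) the tail sum is really over \emph{pairs} $(m,k)$, with $m=q_1\cdots q_{j-1}$ and $k=(p-1)/n<X^{\epsilon}$, rather than over distinct values $a=mk$, so that the relevant bound is $\sum_{m}\sum_{k<X^{\epsilon}}1/\phi(mk)\le\bigl(\sum_{m}1/\phi(m)\bigr)\bigl(\sum_{k<X^{\epsilon}}1/\phi(k)\bigr)\ll_U\epsilon\log X$ (using $\phi(mk)\ge\phi(m)\phi(k)$ and $\sum_{Y<q<X}1/(q-1)\ll\log U$); and (ii) the $o(1)$ coming from weak EHC depends on $\epsilon$, so the final passage $\epsilon\downarrow 0$ should be phrased as the usual ``for every $\delta>0$ choose $\epsilon$ then $X_0(\epsilon)$'' argument.
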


\noindent Proposition \ref{prop:granville}, which is due to Granville (unpublished), conditionally confirms \eqref{eq:pomconjecture} when $X\to\infty$ in the range $X\ge Y\ge X^{1/U}$.  A generalization of Proposition \ref{prop:granville} appears (with a full proof) as \cite[Theorem 1]{lamzouri07}. See also Lemma 4.1 in \cite{wang18}. We note that the proofs of Proposition \ref{prop:granville} use only the even weaker form of EHC where $\pi(x;n,a)$ is replaced with $\pi(x;n,1)$ and the max on $a$ is removed, and that  EHC enters the proofs of Theorems \ref{thm:main1} and \ref{thm:main1point5} only via Proposition \ref{prop:granville}.

The next result, due to Pomerance and Shparlinski (see \cite[Theorem 1]{PS02}), is unconditional and applies in a much wider range of $X$ and $Y$. But it is only an upper bound, not an asymptotic formula, and the bound itself is slightly weaker than the prediction \eqref{eq:pomconjecture}.

\begin{prop}\label{prop:PS} If $X$ is sufficiently large, and $X\ge Y \ge \exp(\sqrt{\log X\log\log{X}})$, then
\[ \Pi(X,Y) \ll u\rho(u) \frac{X}{\log{X}}. \]
\end{prop}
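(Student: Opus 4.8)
The plan is to reduce $\Pi(X,Y)$ to an upper-bound sieve estimate for shifted smooth numbers and then assemble it with the estimates for $\Psi$ and $\rho$ recalled above. We may assume $u := \log X/\log Y \to \infty$: if $u$ stays bounded (i.e. $Y \ge X^{1/C}$ for a fixed $C$) then the trivial bound $\Pi(X,Y) \le \pi(X) \ll X/\log X \asymp u\rho(u)\,X/\log X$ already gives what we want. The key structural step is to partition the primes being counted according to the size of the largest prime factor of $p-1$. If $p \ge 3$ and $P(p-1)\le Y$, write $p-1 = qm$ with $q := P(p-1)$ prime and $m := (p-1)/q$; then $m$ is $q$-smooth, $m \le (X-1)/q$, and $qm+1 = p$ is prime. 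As this is a bijection onto the set of such pairs $(q,m)$, up to an error $O(1)$,
\[
\Pi(X,Y) \;=\; \sum_{q \le Y} \#\{\, m \le (X-1)/q : P(m)\le q,\ qm+1 \text{ prime}\,\}.
\]

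Fix a prime $q$ and set $M := (X-1)/q$. The numbers $qm+1$, as $m$ ranges over the $q$-smooth integers up to $M$, form a sequence of $\Psi(M,q)$ terms, all below $X$; applying Selberg's upper-bound sieve to it — removing, for each prime $\ell \nmid q$, the residue class $m \equiv -\overline q \pmod \ell$ — one obtains a main term of order $\tfrac{q}{\phi(q)}\Psi(M,q)/\log X$, while the sieve remainder is negligible provided the $q$-smooth numbers are equidistributed in progressions to moduli up to $D = X^{c}$ for some fixed $c > 0$, i.e. provided $\sum_{d \le D}\max_{\gcd(a,d)=1}\big|\Psi(M,q;d,a) - \Psi_d(M,q)/\phi(d)\big|$ is small. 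Such a level of distribution for smooth numbers is exactly what is guaranteed by the hypothesis $Y \ge \exp(\sqrt{\log X\log\log X})$ (one invokes results of Fouvry--Tenenbaum, Soundararajan, Harper, and the like); making this precise is the technical core of the argument. Granting it,
\[
\#\{\, m \le M : P(m)\le q,\ qm+1 \text{ prime}\,\} \;\ll\; \frac{q}{\phi(q)} \cdot \frac{\Psi(M,q)}{\log X}.
\]

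To finish, substitute this into the previous display and sum. Since $q/\phi(q) \le 2$ for primes $q$ and $\Psi(M,q) \asymp M\rho(\log M/\log q)$ in the relevant range, a dyadic decomposition over $q \le Y$, whose dominant contribution comes from $q$ close to $Y$, yields
\[
\sum_{q\le Y} \frac{q}{\phi(q)}\,\Psi\!\big((X-1)/q,\,q\big) \;\ll\; \frac{X}{\log Y}\,\rho(u-1) \;+\; O(\sqrt X).
\]
Hence $\Pi(X,Y) \ll X\rho(u-1)/(\log X \log Y)$. Using the standard estimate $\rho(u-1) \ll u^{2}\rho(u)$ for Dickman's function and the fact that $u = \log X/\log Y = o(\log X)$ throughout the stated range (indeed $u \le \sqrt{\log X/\log\log X}$ there), this rearranges to $\Pi(X,Y) \ll u\rho(u)\,X/\log X$, as claimed.

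The step we expect to be the main obstacle is the sieve bound for the shifted $q$-smooth sequences with the genuine $1/\log X$ saving. A purely combinatorial sieve run over primes $\ell \le (\log X)^{O(1)}$ delivers only a $1/\log\log X$ saving — far too weak once $\log Y$ exceeds $\log\log X$ — so one must truly exploit the distribution of smooth numbers in arithmetic progressions to moduli that are a fixed power of $X$; this is precisely where the hypothesis $Y \ge \exp(\sqrt{\log X\log\log X})$ is essential. Everything else is bookkeeping with the known estimates for $\rho$ and $\Psi$.
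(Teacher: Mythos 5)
The paper does not prove this proposition; it is quoted as Theorem~1 of Pomerance--Shparlinski \cite{PS02}. So the relevant comparison is with their argument, which is both simpler and differently organized than yours: one sieves the set $\{n\le X: P(n)\le Y\}$ directly, removing the class $-1\pmod\ell$ for primes $\ell$ up to a fixed power of $Y$ (not of $X$). The sieve saving is then $\asymp 1/\log Y$, the remainder is controlled using equidistribution of $Y$-smooth numbers in progressions to moduli that are bounded by a power of $Y$ (a far milder requirement than level of distribution $X^c$), and then the hypothesis $Y\ge\exp(\sqrt{\log X\log\log X})$ is used solely to invoke the de Bruijn/Hildebrand asymptotic $\Psi(X,Y)\sim X\rho(u)$. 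Chaining these gives $\Pi(X,Y)\ll \Psi(X,Y)/\log Y \ll X\rho(u)/\log Y = u\rho(u) X/\log X$; the factor $u$ in the statement is exactly the price of saving only $1/\log Y$ rather than $1/\log X$.

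Your write-up has two genuine errors, which happen to cancel, so the plan ``works'' only by accident. First, the asserted $1/\log X$ saving requires a level of distribution $X^c$ for $q$-smooth numbers in progressions; this is not a known unconditional theorem across the stated range, and the hypothesis $Y\ge\exp(\sqrt{\log X\log\log X})$ has nothing to do with it --- it is there to make $\Psi(X,Y)\sim X\rho(u)$. The achievable saving is $1/\log Y$. Second, the intermediate bound
\[
\sum_{q\le Y}\frac{q}{\phi(q)}\Psi\!\left(\frac{X-1}{q},q\right)\;\ll\;\frac{X}{\log Y}\,\rho(u-1)
\]
is false by a factor that grows without bound: the exact identity
\[
\sum_{q\le Y}\Psi\!\left(\frac{X-1}{q},q\right)=\Psi(X-1,Y)+O(1)
\]
(pair $(q,m)$ with $P(m)\le q$, $qm\le X-1$ bijects with $n=qm\le X-1$, $P(n)\le Y$, via $q=P(n)$) shows the sum is $\asymp X\rho(u)$, whereas $X\rho(u-1)/\log Y\asymp X\rho(u)\cdot u^{2}/\log X=o(X\rho(u))$ in this range. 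Multiplying the (correct) $X\rho(u)$ by the (correct) $1/\log Y$ gives $u\rho(u)X/\log X$, which is what you want; multiplying your two incorrect intermediate quantities also lands there, so your final line is right for the wrong reasons. The $P(p-1)=q$ decomposition and the $u\to\infty$ reduction are fine, and $\rho(u-1)\ll u^{2}\rho(u)$ is a correct inequality, but the proof as written would not survive scrutiny because step (1) invokes an unavailable equidistribution input and step (2) is quantitatively wrong.
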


\subsection{Proof of the lower bound in Theorem \ref{thm:main1}}

\begin{lem}\label{lem:afterz} As $z\to\infty$, $$\displaystyle\sum_{\substack{p > z \\ P(p-1) \le z}} \frac{1}{p} \to C_2,$$ where $C_2 = \int_{1}^{\infty} \rho(u) u^{-1}\,\mathrm{d}u$ {\rm(}as in Theorem \ref{thm:main1}{\rm )}.
\end{lem}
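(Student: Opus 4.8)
The plan is to estimate the sum --- call it $S(z):=\sum_{p>z,\,P(p-1)\le z}1/p$ --- by Abel summation against the expected density $\Pi(t,z)\approx\rho(\log t/\log z)\,t/\log t$, exploiting that under the substitution $u=\log t/\log z$ one has $\mathrm{d}t/(t\log t)=\mathrm{d}u/u$, so that the $t$-range $z<t\le z^{V}$ becomes the $u$-range $1<u\le V$. Fix a parameter $U>1$, to be sent to infinity only after $z$, and split
\[ S(z)=S_1+S_2+S_3, \]
where $S_1,S_2,S_3$ collect the primes $p$ in $(z,z^U]$, in $(z^U,z^V]$, and in $(z^V,\infty)$ respectively, with $V:=\lfloor(\log z)^{1/2}\rfloor$; one checks that this choice keeps $z\ge\exp(\sqrt{\log p\log\log p})$ throughout $z<p\le z^V$, so Proposition~\ref{prop:PS} applies on the middle range.

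For $S_1$, Proposition~\ref{prop:granville} (with the fixed exponent $U$) gives $\Pi(t,z)=(1+o(1))\,\rho(\log t/\log z)\,t/\log t$ as $z\to\infty$, uniformly for $z<t\le z^U$. Feeding this into Abel summation --- the boundary terms being $O(z^{-U})+O(\pi(z)/z)=o(1)$ --- and substituting $u=\log t/\log z$ yields $S_1=(1+o(1))\int_1^U\rho(u)\,u^{-1}\,\mathrm{d}u$ for each fixed $U$. For $S_2$ I would run the same partial summation with the upper bound $\Pi(t,z)\ll u\rho(u)\,t/\log t$ of Proposition~\ref{prop:PS}; since the substitution turns $u\rho(u)/u$ into $\rho(u)$, this gives $S_2\ll\int_U^{V}\rho(u)\,\mathrm{d}u\le\int_U^\infty\rho(u)\,\mathrm{d}u$, and because $\rho(u)=u^{-u+o(u)}$ this tail integral converges, so $S_2\le\varepsilon(U)$ with $\varepsilon(U)\to0$ as $U\to\infty$, uniformly in $z$.

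The one range with no available asymptotic --- and the main obstacle --- is $S_3$, where $p$ is super-polynomially large in $z$ and neither Proposition~\ref{prop:granville} (needing $u$ bounded) nor Proposition~\ref{prop:PS} (needing $z$ not too small relative to $p$) applies. Here I would discard primality and the shift, $S_3\le\sum_{n>z^V,\,P(n)\le z}1/n$, and apply Rankin's trick: for $\delta=1/\log z$ this sum is at most $z^{-\delta V}\prod_{q\le z}(1-q^{\delta-1})^{-1}$, and since $q^{\delta}\le z^{1/\log z}=\mathrm{e}$ for every $q\le z$, the product is $\exp(O(\log\log z))$ while $z^{-\delta V}=\mathrm{e}^{-V}=\mathrm{e}^{-(\log z)^{1/2}+O(1)}$; hence $S_3\to0$ as $z\to\infty$.

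Putting the pieces together, for each fixed $U$ we get $\int_1^U\rho(u)u^{-1}\,\mathrm{d}u\le\liminf_{z\to\infty}S(z)\le\limsup_{z\to\infty}S(z)\le\int_1^U\rho(u)u^{-1}\,\mathrm{d}u+\varepsilon(U)$. Letting $U\to\infty$, the outer integrals increase to the (finite) value $C_2=\int_1^\infty\rho(u)u^{-1}\,\mathrm{d}u$ and $\varepsilon(U)\to0$, so $S(z)\to C_2$, as claimed.
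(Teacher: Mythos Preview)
Your proof is correct and follows essentially the same three-range decomposition and the same tools (Proposition~\ref{prop:granville} on $(z,z^U]$, Proposition~\ref{prop:PS} on $(z^U,\exp((\log z)^{3/2})]$, and a crude smooth-number bound for the tail) as the paper's own proof; note that your cutoff $z^V$ with $V=\lfloor(\log z)^{1/2}\rfloor$ is exactly the paper's $\exp((\log z)^{3/2})$. Two small remarks: the upper boundary term in your Abel summation for $S_1$ is $\Pi(z^U,z)/z^U=O(1/\log z)$, not $O(z^{-U})$ (harmless, since you only need $o(1)$); and for $S_3$ you carry out Rankin's trick by hand, whereas the paper simply quotes the uniform bound $\Psi(X,Y)\ll X\exp(-u/2)$ --- these are essentially the same argument.
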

\begin{proof} Fix a real number $U>1$. Using $\sum$ for the sum appearing in the lemma statement, we rewrite $\sum=\sum_1 + \sum_2+\sum_3$,  partitioning the range of summation as follows;
\[ \sum\nolimits_1\!:~z < p \le z^{U},\qquad \sum\nolimits_2\!:~z^U < p \le \exp((\log{z})^{3/2}), \qquad \sum\nolimits_3\!:~p > \exp((\log{z})^{3/2}).
\]

We estimate $\sum_1$ using Proposition \ref{prop:granville}. That result implies that, as $z\to\infty$,
\begin{align*}
 \sum\nolimits_1 &= \int_{z}^{z^U} \frac{\mathrm{d}\Pi(t,z)}{t} = \int_{z}^{z^U}\frac{\Pi(t,z)}{t^2}\,\mathrm{d}t+ o(1) \\
 &= (1+o(1)) \int_{z}^{z^U} \frac{1}{t\log{t}} \rho\left(\frac{\log{t}}{\log{z}}\right)\, \mathrm{d}t + o(1) = \int_{1}^{U} \rho(u) u^{-1}\,\mathrm{d}u + o(1),
\end{align*}
where in the last step we made the change of variables $t=z^u$. Hence,
\[ \lim_{z\to\infty} \sum\nolimits_1 = \int_{1}^{U}\rho(u) u^{-1}\,\mathrm{d}u.\]

A similar calculation, using Proposition \ref{prop:PS} in place of Proposition \ref{prop:granville}, reveals that
\[ \limsup_{z\to\infty} \sum\nolimits_2 \ll \int_{U}^{(\log{z})^{1/2}} \rho(u)\,\mathrm{d}u \le \int_{U}^{\infty} \rho(u)\,\mathrm{d}u, \]
where the constant implied by ``$\ll$'' is absolute (independent of $U$).

We handle $\sum_3$ using the trivial bound $\Pi(X,Y)\le \Psi(X,Y)$, along with the uniform estimate $\Psi(X,Y) \ll X \exp(-u/2)$, which holds for all $X\ge Y\ge 2$ (see \cite[Theorem 5.1, p. 512]{tenenbaum15}). We find that for large $z$,
\begin{align*} \sum\nolimits_3 &\le \int_{\exp((\log{z})^{3/2})}^{\infty} \frac{\Pi(t,z)}{t^2}\,\mathrm{d}{t} \le \int_{\exp((\log{z})^{3/2})}^{\infty} \frac{\Psi(t,z)}{t^2}\,\mathrm{d}{t}
\ll \int_{\exp((\log{z})^{3/2})}^{\infty} \frac{1}{t} \exp\left(-\frac{1}{2}\frac{\log{t}}{\log{z}}\right)\,\mathrm{d}t \\&= \log{z} \int_{(\log{z})^{1/2}}^{\infty} \exp\left(-\frac{1}{2}u\right)\,\mathrm{d}u \ll \log{z} \cdot \exp\left(-\frac{1}{2}(\log{z})^{1/2}\right).
\end{align*}
Thus,
$$ \lim_{z\to\infty} \sum\nolimits_3 = 0. $$

Since $\sum=\sum_1+\sum_2+\sum_3$, we deduce that
\begin{equation}\label{eq:limsupest} \limsup_{z\to\infty} \sum_{\substack{p > z \\ P(p-1) \le z}} \frac{1}{p} \le \int_{1}^{U}\rho(u) u^{-1} \, \mathrm{d}u + O\left(\int_{U}^{\infty} \rho(u)\, \mathrm{d}u\right), \end{equation}
and, since $\sum \ge \sum_1$, that
\begin{equation}\label{eq:liminfest} \liminf_{z\to\infty} \sum_{\substack{p > z \\ P(p-1) \le z}} \frac{1}{p} \ge \int_{1}^{U} \rho(u) u^{-1}\, \mathrm{d}u. \end{equation}
The lemma follows from \eqref{eq:limsupest} and \eqref{eq:liminfest} upon letting $U$ tend to infinity.
\end{proof}

\begin{proof}[Proof of the lower bound in Theorem \ref{thm:main1}] Let $x$ be large, let $z = \frac{1}{2}\log{x},$ and let $n$ be the least common multiple of the natural numbers  not exceeding $z$. By the prime number theorem, $\log{n} = (\frac{1}{2}+o(1))\log{x}$ as $x\to\infty$, and so in particular $n \le x$ for large $x$. Thus, for the lower bound in Theorem \ref{thm:main1}, it is enough to prove that as $x\to\infty$,
\begin{equation*} f(n) \ge \log\log\log{x} + C_1-\frac{1}{2} + C_2 + o(1). \end{equation*}

Observe that
\begin{align*} f(n) = \sum_{\ell(p)\mid n}\frac{1}{p} \ge \sum_{\substack{p>2 \\ p-1 \mid n}}\frac{1}{p} &= \sum_{2 < p \le z} \frac{1}{p} + \sum_{\substack{p > z \\ p-1 \mid n}} \frac{1}{p} \\ &= \sum_{2 < p \le z} \frac{1}{p} + \sum_{\substack{p > z \\ P(p-1) \le z}} \frac{1}{p} - E, \end{align*}
where
\[ E:= \sum_{\substack{p > z \\ P(p-1) \le z \\ p-1\nmid n}} \frac{1}{p}. \]
We will show shortly that $E=o(1)$ as $x\to\infty$. Assuming this for now, Mertens' theorem and Lemma \ref{lem:afterz} yield
\begin{align*}
f(n) &\ge (\log\log{z} + C_1 - 1/2) + C_2 + o(1) \\
&= \log\log\log{x} + C_1 - \frac{1}{2} + C_2 + o(1),
\end{align*}
as desired.

We turn now to proving that $E=o(1)$, as $z\to\infty$. In view of the upper bounds for $\sum_2$ and $\sum_3$ established in the proof of Lemma \ref{lem:afterz}, it will suffice to show that
\[ \sum_{\substack{z < p \le z^A \\ P(p-1) \le z \\ p-1\nmid n}} \frac{1}{p} = o(1), \]
for each fixed $A>1$. So suppose that $P(p-1)\le z$ and $p-1\nmid n$. Choose a prime power $\ell^e$ dividing $p-1$ with $\ell \le z$ and $\ell^e > z$. Then $e>1$, and $\ell^e$ is a squarefull divisor of $p-1$ exceeding $z$. It follows that
\begin{align*}
\sum_{\substack{z < p \le z^A \\ P(p-1) \le z \\ p-1\nmid n}} \frac{1}{p} &\le \sum_{\substack{m\text{ squarefull} \\ m > z}} \sum_{\substack{p \le z^A \\ m \mid p-1}} \frac{1}{p } \le \sum_{\substack{m\text{ squarefull} \\ m > z}} \sum_{\substack{p \le z^A \\ m \mid p-1}} \frac{1}{p-1} \\ &\le \sum_{\substack{m\text{ squarefull} \\ m > z}} \sum_{k \le z^A} \frac{1}{mk}\le \log(z^A+1) \sum_{\substack{m\text{ squarefull} \\ m > z}} \frac{1}{m} \ll \log(z^A+1) z^{-1/2}.
\end{align*}
The final expression tends to $0$ as $z$ tends to infinity.
\end{proof}

\subsection{Proof of the upper bound in Theorem \ref{thm:main1}} The Generalized Riemann Hypothesis enters into our proof by way of the following lemma of Kurlberg and Pomerance \cite[Theorem 23]{KP05}.

\begin{lem}[conditional on GRH]\label{lem:GRHlem} For all real $X, Y$ with $1\le Y \le \log{X}$,
\[ \#\left\{p\le X: \ell(p) \le \frac{p}{Y}\right\} \ll \frac{X}{Y\log X} + \frac{X\log\log{X}}{(\log{X})^2}. \]
\end{lem}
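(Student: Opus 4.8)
\emph{Proof idea.} The plan is to pass from the multiplicative order $\ell(p)$ to the \emph{index} $i_p:=(p-1)/\ell(p)$ of $2$ modulo $p$, and then count primes according to the divisor structure of $i_p$. For $p>Y$ one checks that $\ell(p)\le p/Y$ is equivalent to $i_p\ge\lceil Y(1-1/p)\rceil$, which for integer $Y$ (the general case differs only by harmless rounding) is just $i_p\ge Y$; the primes $p\le Y\le\log X$ number $O(\log X)$ and are absorbed by the error term. So it suffices to bound $\#\{p\le X:i_p\ge Y\}$, and I would split this according to whether $i_p$ is $Y$-smooth. If $i_p$ is $Y$-smooth with $i_p\ge Y$, then multiplying its prime factors together one at a time yields a divisor $d\mid i_p$ with $Y\le d<Y^2$; since $Y\le\log X$, such a $d$ is tiny ($d\le(\log X)^2$). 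If $i_p$ is \emph{not} $Y$-smooth, then $q:=P(i_p)>Y$ and $q\mid i_p$.

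The workhorse is a Chebotarev estimate. For $d\ge 1$ set $\pi_d(X):=\#\{p\le X:d\mid i_p\}$, and observe that for $p\nmid 2d$ the condition $d\mid i_p$ is equivalent to $p\equiv 1\pmod d$ together with $2^{(p-1)/d}\equiv 1\pmod p$, i.e.\ to $p$ splitting completely in the Kummer field $K_d:=\Q(\zeta_d,2^{1/d})$. Its degree $n_d:=[K_d:\Q]$ equals $d(d-1)$ when $d$ is prime and in general satisfies $n_d\ge\tfrac12 d\,\phi(d)$, and $\log|\mathrm{disc}(K_d)|\ll n_d\log(2d)$. Hence, on GRH, the effective Chebotarev theorem (Lagarias--Odlyzko/Serre) gives, uniformly for $d\le X$,
\[ \pi_d(X)=\frac{\mathrm{li}(X)}{n_d}+O\!\left(\sqrt X\,\log(dX)\right), \]
the error being clean because the $1/n_d$ in the Chebotarev error balances the contributions $\asymp n_d\log d$ of $\log|\mathrm{disc}(K_d)|$ and $n_d\log X$. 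Summing this over $Y\le d<Y^2$ disposes of the $Y$-smooth primes: the main terms contribute $\ll\tfrac{X}{\log X}\sum_{d\ge Y}\tfrac{1}{d\phi(d)}\ll\tfrac{X}{Y\log X}$ (by partial summation from $\sum_{d\le t}1/\phi(d)\ll\log t$), and since every $d$ here is $\le(\log X)^2$ the error terms contribute only $\ll(\log X)^2\sqrt X\log X\ll X/(\log X)^2$.

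For the non-$Y$-smooth primes I would bound $\#\{p\le X:i_p\ge Y,\ i_p\text{ not }Y\text{-smooth}\}\le\sum_{q>Y}\pi_q(X)$ and split at $\xi_1:=\sqrt X/(\log X)^2$ and $\xi_2:=\sqrt{X\log X}$. For $Y<q\le\xi_1$, the displayed estimate with $n_q=q(q-1)$ gives main terms $\ll\tfrac{X}{\log X}\sum_{q>Y}q^{-2}\ll\tfrac{X}{Y\log X}$ and error terms $\ll\pi(\xi_1)\sqrt X\log X\ll\xi_1\sqrt X\ll X/(\log X)^2$. For $\xi_1<q\le\xi_2$, drop the power-residue condition and invoke Brun--Titchmarsh, $\pi_q(X)\le\#\{p\le X:p\equiv 1\pmod q\}\ll X/(q\log X)$ (legitimate since $\log(X/q)\asymp\log X$ here); as $(\xi_1,\xi_2]$ is a multiplicatively short band around $\sqrt X$, Mertens gives $\sum_{\xi_1<q\le\xi_2}1/q=\log\frac{\log\xi_2}{\log\xi_1}+O(1/\log X)\ll\frac{\log\log X}{\log X}$, so this piece is $\ll X\log\log X/(\log X)^2$. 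Finally, for $q>\xi_2$: from $q\mid i_p$ we get $\ell(p)\mid m:=(p-1)/q$, hence $p\mid 2^m-1$ while $p=qm+1>m\xi_2$ and $m<X/\xi_2$; since $\prod_{p\mid 2^m-1}p<2^m$, at most $m\log 2/\log\xi_2$ values of $p$ are eligible for each $m$, and summing over $m<X/\xi_2$ gives $\ll(X/\xi_2)^2/\log\xi_2\ll X/(\log X)^2$. Adding the five contributions yields the asserted bound.

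The hard part is the interplay of the three ranges of $q$: the GRH--Chebotarev errors are only cheap enough to be summed over primes up to roughly $\sqrt X/(\log X)^2$, whereas the elementary bound via large prime factors of $2^m-1$ only becomes effective for $q$ beyond roughly $\sqrt{X\log X}$, so an intermediate band of primes around $\sqrt X$ must be handled by Brun--Titchmarsh alone. The bound survives only because that band is short enough that $\sum_q 1/q$ over it is $O(\log\log X/\log X)$ rather than $O(1)$ — this is precisely the source of the secondary term $X\log\log X/(\log X)^2$. Lining up all the logarithmic factors (together with the transition between $\ell(p)$ and $i_p$, and the smallness of the ``smooth'' moduli $d<Y^2$, which is where the hypothesis $Y\le\log X$ is used) is the part that requires genuine care.
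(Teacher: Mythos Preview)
The paper does not prove this lemma; it is quoted verbatim as Theorem~23 of Kurlberg--Pomerance \cite{KP05}. So there is no ``paper's own proof'' to compare against, only the cited source. That said, your sketch is precisely the Hooley-type argument that underlies the Kurlberg--Pomerance result: reduce to bounding $\#\{p\le X:i_p\ge Y\}$, peel off a medium-sized divisor of $i_p$ (either a $d\in[Y,Y^2)$ in the smooth case, or a prime $q>Y$ otherwise), apply the GRH effective Chebotarev theorem in the Kummer fields $\Q(\zeta_d,2^{1/d})$ for small moduli, switch to Brun--Titchmarsh for $q$ in a narrow band around $\sqrt X$, and finish with the elementary ``$p\mid 2^m-1$, $m$ small'' count for $q>\sqrt{X\log X}$. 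The identification of the secondary term $X\log\log X/(\log X)^2$ with the Mertens sum over the Brun--Titchmarsh band is exactly right, and the cutoff choices $\xi_1,\xi_2$ are tuned correctly.

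One small slip: your partial-summation justification of $\sum_{d\ge Y}1/(d\phi(d))\ll 1/Y$ from the input $\sum_{d\le t}1/\phi(d)\ll\log t$ only yields $\ll(\log Y)/Y$. The bound $\ll 1/Y$ you state is nevertheless correct, but it comes instead from $\sum_{d\le t}d/\phi(d)\ll t$ (the mean value of $d/\phi(d)$ is bounded), after writing $1/(d\phi(d))=(d/\phi(d))\cdot d^{-2}$ and summing by parts. With that fix, your argument goes through and matches the approach in \cite{KP05}.
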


Call the odd prime $p$ \textsf{normal} if $\ell(p) > p/(\log\log{p})^2$ and \textsf{abnormal} otherwise. By Lemma \ref{lem:GRHlem}, the number of abnormal $p \in [X,2X]$ is $\ll \frac{X}{\log{X} \cdot (\log\log{X})^2}$ for all large $X$. Breaking $[3,X]$ into dyadic blocks, we deduce that the same bound holds for the number of abnormal $p\le X$. Hence,
\[ \sum_{p \text{ abnormal}}\frac{1}{p} < \infty, \]
which we will make use of momentarily.

\begin{proof}[Proof of the upper bound in Theorem \ref{thm:main1}] Let $x$ be large, let $n \le x$, and let $L$ be the least common multiple of the natural numbers not exceeding $(\log\log{x})^2$. If $p$ is a normal odd prime and $\ell(p)$ divides $n$, then
\[ p-1 = \ell(p) \cdot \frac{p-1}{\ell(p)} \mid n\cdot L. \]
We also have trivially that $p-1 \mid nL$ for all $p \le (\log\log{x})^2$. Thus,
\[ \sum_{\ell(p) \mid n}\frac{1}{p} \le \sum_{\substack{p > 2 \\ p-1\mid nL}} \frac{1}{p} + \sum_{\substack{p \text{ abnormal} \\ p > (\log\log{x})^2}} \frac{1}{p} \le \sum_{\substack{p > 2 \\ p-1\mid nL}} \frac{1}{p} +o(1). \]
(In the last step, we use that $\sum_{p\text{ abnormal}}\frac{1}{p} < \infty$.)  Now let $$z=\log{x} \cdot \log\log{x}.$$ Clearly,
\begin{equation}\label{eq:clearsum} \sum_{\substack{p > 2 \\ p-1\mid nL}} \frac{1}{p} \le \sum_{\substack{p > 2 \\ P(p-1) \le z}} \frac{1}{p} + \sum_{\substack{p-1 \mid nL \\ P(p-1) > z}}\frac{1}{p}.\end{equation}
The first right-hand sum satisfies
\begin{align*} \sum_{\substack{p > 2 \\ P(p-1) \le z}} \frac{1}{p} &= \sum_{2 < p \le z} \frac{1}{p} + \sum_{\substack{p > z \\ P(p-1) \le z}}\frac{1}{p} \\&= \log\log z + C_1 - \frac{1}{2} + C_2 + o(1) \\
&= \log\log\log x + C_1 - \frac{1}{2} + C_2 + o(1),
\end{align*}
and so to finish the proof it is enough to show that the second sum on the right of \eqref{eq:clearsum} is $o(1)$.

Suppose that $P(p-1)> z$ and $p-1\mid nL$. Then there is a prime $q>z$ dividing both $n$ and $p-1$. Hence,
\[ \sum_{\substack{p-1 \mid nL \\ P(p-1) > z}} \frac{1}{p} \le \sum_{\substack{q\mid n \\ q > z}} \sum_{\substack{d \mid nL \\ q \mid d}} \frac{1}{d} \le \sum_{\substack{q\mid n\\ q > z}}\frac{1}{q}\sum_{e\mid nL} \frac{1}{e} = \frac{\sigma(nL)}{nL} \sum_{\substack{q \mid n \\ q > z}}\frac{1}{q} \le \frac{\sigma(nL)}{nL} \cdot \frac{\omega(n)}{z}.     \]
We finish the proof by appealing to the known maximal orders of $\sigma$ and $\omega$.
By Theorem 323 on p.\ 350 of \cite{HW08}, $\frac{\sigma(nL)}{nL} \ll \log\log(nL)$. By the prime number theorem, $L < \exp(2(\log\log{x})^2)$, so that $nL < x \exp(2(\log\log{x})^2)$ and  $\log\log(nL) \ll \log\log{x}$. Also (see p.\ 471 in \cite{HW08}), $\omega(n) \ll \log x/\log\log{x}$. Thus,
\[ \frac{\sigma(nL)}{nL} \cdot \frac{\omega(n)}{z} \ll \log\log{x} \cdot \frac{\log{x}/\log\log{x}}{\log{x} \cdot \log\log{x}} = \frac{1}{\log\log{x}}, \]
which indeed tends to $0$ as $x$ tends to infinity.
\end{proof}

\section{The sharp maximal order of $F(n)$: Proof of Theorem \ref{thm:main1point5}} The proof of Theorem \ref{thm:main1point5} is based on the following variant of Theorem \ref{thm:main1}, where $1/p$ is replaced by $\log \frac{p}{p-1}$. The required changes in the proof are straightforward, bearing in mind that $\log \frac{p}{p-1} = 1/p + O(1/p^2)$.

\begin{prop}[conditional on GRH and weak EHC]\label{prop:main1variant} As $x\to\infty$, we have
\begin{equation}\label{eq:main1variant} \sum_{\ell(p) \mid n} \log \frac{p}{p-1} \le \sum_{2 < p \le \frac{1}{2}\log{x}} \log\frac{p}{p-1} + C_2 + o(1) \end{equation}
uniformly for all $n\le x$. Moreover, equality holds in \eqref{eq:main1variant} when $n$ is taken as the least common multiple of the natural numbers not exceeding  $\frac{1}{2}\log{x}$.
\end{prop}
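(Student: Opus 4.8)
The plan is to rerun both halves of the proof of Theorem~\ref{thm:main1} with $1/p$ replaced by $\log\frac{p}{p-1}$ everywhere, relying on the elementary estimate $\log\frac{p}{p-1} = \frac1p + O\!\left(\frac1{p^2}\right)$ (indeed $\frac1p \le \log\frac{p}{p-1} \le \frac1{p-1}\le\frac2p$ for $p\ge2$). Since $\sum_p p^{-2}$ converges, replacing $1/p$ by $\log\frac{p}{p-1}$ in any of the sums appearing in \S\ref{sec:sharpmax} alters them by a quantity that is $O(1)$ in general and $o(1)$ once the summation is restricted to primes exceeding a threshold tending to infinity. In particular $\sum_{p\text{ abnormal}}\log\frac{p}{p-1}<\infty$, so the normal/abnormal reduction from \S\ref{sec:sharpmax} goes through verbatim, and Lemma~\ref{lem:afterz} immediately yields $\sum_{p>z,\,P(p-1)\le z}\log\frac{p}{p-1} = C_2 + o(1)$ as $z\to\infty$.

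For the upper bound, let $n\le x$, let $L$ be the lcm of the naturals up to $(\log\log x)^2$, and set $z=\log x\log\log x$, exactly as before. The normal/abnormal argument gives $\sum_{\ell(p)\mid n}\log\frac{p}{p-1} \le \sum_{p>2,\,p-1\mid nL}\log\frac{p}{p-1} + o(1)$. Splitting the right side according to $P(p-1)\le z$ or $P(p-1)>z$, the part with $P(p-1)>z$ is $o(1)$ by the same $\sigma/\omega$ computation as in Theorem~\ref{thm:main1} (using $\log\frac{p}{p-1}\le 2/p$), while the part with $P(p-1)\le z$ equals $\sum_{2<p\le z}\log\frac{p}{p-1} + C_2 + o(1)$ by the observations above. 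Finally, $\sum_{2<p\le z}\log\frac{p}{p-1}$ and $\sum_{2<p\le\frac12\log x}\log\frac{p}{p-1}$ differ only by a sum over the primes in $(\tfrac12\log x,z]$, which is $\sum_{\frac12\log x<p\le z}\frac1p + o(1) = \log\log z - \log\log(\tfrac12\log x) + o(1) = o(1)$ by the Meissel--Mertens estimate. Hence~\eqref{eq:main1variant} holds uniformly for all $n\le x$.

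For the equality claim, take $z=\frac12\log x$ and let $n$ be the lcm of the naturals up to $z$, as in the lower bound of Theorem~\ref{thm:main1}. Since $\ell(p)\mid p-1$, we have $\sum_{\ell(p)\mid n}\log\frac{p}{p-1} \ge \sum_{p>2,\,p-1\mid n}\log\frac{p}{p-1} = \sum_{2<p\le z}\log\frac{p}{p-1} + \sum_{p>z,\,p-1\mid n}\log\frac{p}{p-1}$; the last sum differs from $\sum_{p>z,\,P(p-1)\le z}\log\frac{p}{p-1} = C_2 + o(1)$ by the ``$p-1\nmid n$'' error, which is at most twice the quantity $E$ already shown to be $o(1)$ in the proof of Theorem~\ref{thm:main1}. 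Thus this particular $n$ attains equality in~\eqref{eq:main1variant}, completing the proof. I do not expect a genuine obstacle: the only real care needed is the bookkeeping reconciling the two truncation points ($z=\log x\log\log x$ in the upper bound, $z=\frac12\log x$ in the lower), together with the routine check that each error estimate from \S\ref{sec:sharpmax} survives the substitution $1/p\mapsto\log\frac{p}{p-1}$.
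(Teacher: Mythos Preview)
Your proposal is correct and follows exactly the approach the paper itself indicates: the paper does not write out a separate proof for Proposition~\ref{prop:main1variant} but simply remarks that ``the required changes in the proof are straightforward, bearing in mind that $\log\frac{p}{p-1} = 1/p + O(1/p^2)$,'' which is precisely the substitution-and-bookkeeping you carry out. Your extra care in reconciling the two truncation points $z=\log x\log\log x$ and $z=\tfrac12\log x$ is the one detail the paper leaves implicit, and you handle it correctly.
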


\begin{proof}[Proof of Theorem \ref{thm:main1point5}] If $\ell(d) \mid n$, then $\ell(p) \mid n$ for all primes $p$ dividing $d$. Hence, $F(n) = \sum_{\ell(d)\mid n}1/d \le \prod_{\ell(p)\mid n}\left(1+1/p+1/p^2+\dots\right)$, so that by Proposition \ref{prop:main1variant},
\begin{align*} \log F(n) &\le \sum_{\ell(p)\mid n}\log \frac{p}{p-1} \\ &\le \log\bigg(\prod_{2 < p \le \frac{1}{2}\log{x}}(1-1/p)^{-1}\bigg) + C_2 + o(1) \\
&= \log\log\log{x} + \gamma -\log{2} + C_2 + o(1),
\end{align*}
using Mertens' product theorem in the final step. Exponentiating gives the upper inequality in Theorem \ref{thm:main1point5}.

We now let $n$ be the least common multiple of the integers up to $\frac{1}{2}\log{x}$ and prove that the lower inequality holds.  Suppose that $d$ has the form $p_1^{e_1} \cdots p_k^{e_k} q_1 \dots q_l$,
where the $p_i$ are distinct primes in $(2,\log\log x]$, each $p_i^{e_i} \le \frac{1}{2}\log{x}$, and the $q_i$ are distinct primes exceeding $\log\log{x}$ where each $\ell(q_i) \mid n$. As $\ell(p_i^{e_i}) \mid (p_i-1) p_i^{e_i-1} \mid n$ for each $i$, we see that
\[ \ell(d) = \lcm[\ell(p_1^{e_1}),\dots,\ell(p_k^{e_k}), \ell(q_1),\dots,\ell(q_l)] \mid n. \]
Hence, with $p$ and $q$ running over primes and $e_p:= \lfloor \frac{\log(\frac12 \log x)}{\log p}\rfloor$,
\begin{align*} F(n) = \sum_{\ell(d) \mid n}\frac{1}{d} &\ge \prod_{2 < p \le \log\log{x}} \left(1+\frac{1}{p} + \frac{1}{p^2} + \dots + \frac{1}{p^{e_p}}\right) \prod_{\substack{q>\log\log{x} \\ \ell(q)\mid n}}\left(1+\frac{1}{q}\right)\\
&=\prod_{2 < p \le \log\log{x}} \frac{p}{p-1} \left(1-\frac{1}{p^{e_p+1}}\right) \prod_{\substack{q>\log\log{x} \\ \ell(q)\mid n}}\frac{q}{q-1} \left(1-\frac{1}{q^2}\right)\\
&\ge \prod_{\ell(p) \mid n} \frac{p}{p-1} \prod_{2 < p \le \log\log{x}} \left(1-\frac{1}{p^{e_p+1}}\right) \prod_{q > \log\log{x}} \left(1-\frac{1}{q^2}\right).
\end{align*}
Noting that $p^{e_p+1} > \frac{1}{2}\log{x}$,
\begin{align*} \prod_{2 < p \le \log\log{x}} \left(1-\frac{1}{p^{e_p+1}}\right) &\ge 1 - \sum_{2 < p \le \log\log{x}} \frac{1}{p^{e_p+1}} \ge 1-\frac{1}{\frac{1}{2}\log x} \sum_{2 < p \le \log\log{x}} 1 = 1+o(1).
\end{align*}
The final product on $q$ appearing above is also $1+o(1)$. Hence,
\[ F(n) \ge (1+o(1)) \prod_{\ell(p) \mid n}\frac{p}{p-1} = \exp\left(\sum_{\ell(p)\mid n}\log \frac{p}{p-1} + o(1)\right). \]
Using that equality holds in \eqref{eq:main1variant} for our choice of $n$ and applying Mertens' product theorem in the same manner as before, the desired lower bound follows.
\end{proof}

\section{The reciprocal sum of primitive divisors: Proof of Theorem \ref{thm:1andthreequarters}}
Let $\lambda(n)$ denote the universal exponent for the multiplicative group modulo $n$. The next proposition, due to Friedlander, Pomerance, and Shparlinski \cite[Theorem 5]{FPS01}, shows that $\lambda(n)$ is usually not  much smaller than $n$.

\begin{prop}\label{prop:FPS} For all large $X$, and all $\Delta \ge (\log\log{X})^3$, the inequality $\lambda(n) > n \exp(-\Delta)$ holds for all positive integers $n\le X$ with at most $X \exp(-0.69 (\Delta \log \Delta)^{1/3})$ exceptions.
\end{prop}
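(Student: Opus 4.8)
The plan is to discard all of $\lambda(n)$ except the part of $\phi(n)$ coming from \emph{overlaps} among the prime factorizations of the $\phi(p^{a})$, and then to estimate that overlap by splitting off the small primes and running a Rankin-type (exponential moment) argument on each of the two resulting pieces. First I reduce to a statement about $\phi$: every prime dividing $\phi(n)=\prod_{p^{a}\|n}\phi(p^{a})$ divides some $\phi(p^{a})$ and hence divides $\lambda(p^{a})\mid\lambda(n)$, so $\lambda(n)\ge\rad(\phi(n))$. Since also $n/\phi(n)\ll\log\log n$, the inequality $\lambda(n)\le n\,e^{-\Delta}$ forces $\phi(n)/\rad(\phi(n))\ge e^{\Delta'}$ with $\Delta'=\Delta-O(\log\log\log X)$, and $\Delta'\ge\tfrac12\Delta$ because $\Delta\ge(\log\log X)^{3}$. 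Writing $\phi(n)/\rad(\phi(n))=\prod_{\ell\mid\phi(n)}\ell^{\,v_{\ell}(\phi(n))-1}$ and, for a parameter $z$ to be chosen, splitting this into the factor $A_{1}(n)$ over primes $\ell\le z$ and the factor $A_{2}(n)$ over primes $\ell>z$, it suffices to bound separately the number of $n\le X$ with $A_{1}(n)>e^{\Delta'/2}$ and the number with $A_{2}(n)>e^{\Delta'/2}$.

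For the large primes: if $A_{2}(n)>e^{\Delta'/2}$ then $\phi(n)$ is divisible by the squarefull number $m=\prod_{\ell>z:\,\ell^{2}\mid\phi(n)}\ell^{\,v_{\ell}(\phi(n))}$, which satisfies $P^{-}(m)>z$ and $\log m\ge\Delta'/2$. I would bound $\#\{n\le X:m\mid\phi(n)\}$ by realizing each condition $\ell^{a}\mid\phi(n)$ through primes $p\mid n$ in the progressions $1\bmod\ell^{j}$ — so that $\ell^{a}$ costs essentially a factor $\ell^{-a}$, up to bounded powers of $\log X$, via Brun--Titchmarsh and $\sum_{p\le X,\,m\mid p-1}1/p\ll(\log\log X)/\phi(m)$ — and then sum over the admissible $m$ with Rankin's trick. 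The hypothesis $\Delta\ge(\log\log X)^{3}$ enters here: it permits $\log z\gg\log\log X$, which is exactly what makes the relevant Euler product over $\ell>z$ converge, and the outcome is a bound $\ll X\exp(-c\Delta')$, well below the target.

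The main case is $A_{1}(n)>e^{\Delta'/2}$. Since $\log A_{1}(n)\le\log(z\text{-smooth part of }n)+\sum_{p\mid n}g_{z}(p)$ with $g_{z}(p):=\log(z\text{-smooth part of }(p-1))$, and the $z$-smooth part of $n$ rarely exceeds $e^{\Delta'/4}$ (Rankin on $z$-smooth divisors), the crux is to estimate $\#\{n\le X:\sum_{p\mid n}g_{z}(p)>T\}$ with $T\asymp\Delta$. For this I would use, for $0<\beta<1$, the exponential moment bound
\[ \#\Bigl\{n\le X:\sum_{p\mid n}g_{z}(p)>T\Bigr\}\le e^{-\beta T}\sum_{n\le X}\prod_{p\mid n}\bigl(z\text{-smooth part of }(p-1)\bigr)^{\beta}\ll Xe^{-\beta T}\exp\Bigl(2\sum_{p\le X}\frac{(z\text{-smooth part of }(p-1))^{\beta}}{p}\Bigr), \]
together with a sieve estimate giving $\sum_{p\le X}(z\text{-smooth part of }(p-1))^{\beta}/p\ll\log\log X\cdot\exp\bigl(\sum_{\ell\le z}\ell^{\,\beta-1}\bigr)$, where $\sum_{\ell\le z}\ell^{\,\beta-1}\ll z^{\beta}/(\beta\log z)$.

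It remains to optimize. The large-prime step forces $\log z$ to be a bit larger than $\log\log X$; the small-prime step needs $z^{\beta}/(\beta\log z)\ll\log(\beta\Delta)$ so the Euler-product factor cannot overwhelm $e^{-\beta T}$; choosing $z$ and $\beta$ to balance these — one may keep $\beta\log z\asymp1$, so $z^{\beta}=O(1)$, with $\beta$ a small negative power of $\Delta$ and $\log z$ the complementary positive power — produces a saving of the form $\exp(-c(\Delta\log\Delta)^{1/3})$, and carefully tracking constants through the sieve estimate, the split, and the choice of $\beta$ yields the admissible value $0.69$. The main obstacle is exactly this balancing act: proving the estimate for $\sum_{p\le X}(z\text{-smooth part of }(p-1))^{\beta}/p$ uniformly in the (genuinely delicate) joint range of $\beta$ and $z$ with sharp enough constants — which rests on careful Brun--Titchmarsh input for primes in progressions to $z$-smooth moduli — together with the routine but fiddly accounting of the $p=2$ and higher prime-power contributions to $\phi$ versus $\lambda$.
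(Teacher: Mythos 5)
The paper never proves this proposition: it is quoted verbatim from Friedlander, Pomerance, and Shparlinski \cite[Theorem 5]{FPS01} and invoked as a black box in the deduction of Lemma \ref{lem:finiterecipsum}. So there is no in-paper argument to compare your sketch against; it can only be judged against the FPS proof itself.

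Your overall strategy---pass from $\lambda(n)$ to $\rad(\phi(n))$, split the powerful part of $\phi(n)$ at a threshold $z$, and then bound the $z$-smooth contribution via Rankin's exponential-moment method with Brun--Titchmarsh supplying the mean-value input, and the $z$-rough (squarefull) contribution via a tail estimate---is the right circle of ideas and is close in spirit to what FPS actually do. But several steps fall short of what the statement requires. First, $\lambda(n)\ge\rad(\phi(n))$ is a genuine one-sided relaxation: you are then proving the strictly stronger assertion that $\phi(n)/\rad(\phi(n))\le e^{\Delta'}$ outside a small exceptional set, and while this is plausibly true with the same $1/3$ in the exponent, it is not evident the numeric constant $0.69$ survives the loss. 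Second, $\log A_1(n)\le \log(z\text{-smooth part of }n)+\sum_{p\mid n}g_z(p)$ conflates the contribution of $n/\rad(n)$ with that of $n$ and drops the ``$-1$'' from $\prod_\ell \ell^{v_\ell-1}$; harmless in spirit but not when chasing a constant. Third, in the Euler-product estimate $\sum_{\ell\le z}\ell^{\beta-1}$ carries an unavoidable $\log\log z$ contribution from the primes $\ell\le e^{1/\beta}$, where $\ell^{\beta-1}\approx 1/\ell$; your calibration $z^\beta/(\beta\log z)\ll\log(\beta\Delta)$ ignores this, and the condition that is actually needed, namely $\log\log X\cdot\exp\bigl(c\sum_{\ell\le z}\ell^{\beta-1}\bigr)\ll\beta\Delta$, is tighter than your description because $\Delta$ is allowed to sit right at the threshold $(\log\log X)^3$. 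Finally, in the large-prime case the event $\ell^a\mid\phi(n)$ must account for several distinct prime factors of $n$ each donating a single factor of $\ell$, not just a single $p\mid n$ with $\ell^a\mid p-1$; a single-progression count undercounts the ways a squarefull $m$ can divide $\phi(n)$, and the union-bound over those realizations is where most of the combinatorial care in arguments of this type lives. None of these gaps is fatal to the plan, but the sharp constant $0.69$ cannot be recovered from the sketch as written: that is exactly the delicate bookkeeping you flag as the obstacle and then leave undone. Given that the present paper simply cites the result, the realistic route is to consult \cite{FPS01} directly rather than reconstruct its proof.
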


The following lower bound for $\ell(n)$, in terms of $\lambda(n)$ and the numbers $\ell(p)$ for $p$ dividing $n$, is due to Kurlberg and Rudnick \cite[see \S5.1]{KR01}. See also Lemma 5 of \cite{KP05}.

\begin{prop}\label{prop:kplambdaell} For all odd numbers $n$,
\[ \ell(n) \ge \frac{\lambda(n)}{n} \prod_{p\mid n}\ell(p). \]
\end{prop}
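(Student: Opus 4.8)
The plan is to reduce everything to prime powers via the Chinese Remainder Theorem and then exploit an exact telescoping identity for iterated least common multiples.

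Write $n = p_1^{a_1}\cdots p_k^{a_k}$ with distinct odd primes $p_i$. By CRT, $\ell(n) = \lcm[\ell(p_1^{a_1}),\dots,\ell(p_k^{a_k})]$ and $\lambda(n) = \lcm[\lambda(p_1^{a_1}),\dots,\lambda(p_k^{a_k})]$, where $\lambda(p^a) = (p-1)p^{a-1}$. I will use two elementary facts about the order of $2$ modulo a prime power: (i) $\ell(p^a)$ is a multiple of $\ell(p)$, since $2^{\ell(p^a)}\equiv 1\pmod{p^a}$ forces $2^{\ell(p^a)}\equiv 1 \pmod p$; and (ii) $\ell(p^a)/\ell(p)$ is a power of $p$. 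Fact (ii) holds because the reduction $(\Z/p^a\Z)^\times\to(\Z/p\Z)^\times$ has kernel of order $p^{a-1}$, so $\ell(p^a)/\ell(p)$ --- which is the order of $2^{\ell(p)}$, an element of that kernel --- divides $p^{a-1}$; since $\ell(p)\mid p-1$ is prime to $p$, this yields $\ell(p^a) = \ell(p)\,p^{\,b}$ for some integer $b\ge 0$. I also record that $\ell(p^a)\mid\lambda(p^a)$.

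The key tool is the identity, valid for positive integers $m_1,\dots,m_k$,
\[ \lcm[m_1,\dots,m_k] = \frac{m_1\cdots m_k}{\displaystyle\prod_{i=2}^{k}\gcd\bigl(m_i,\ \lcm[m_1,\dots,m_{i-1}]\bigr)}, \]
obtained by telescoping $\lcm[a,b]\gcd(a,b) = ab$. Applying it with $m_i = \ell(p_i^{a_i})$ gives an exact formula for $\ell(n)$, and with $m_i = \lambda(p_i^{a_i})$ an exact formula for $\lambda(n)$. Writing $\ell(p_i^{a_i}) = \ell(p_i)p_i^{b_i}$ and $\lambda(p_i^{a_i}) = (p_i-1)p_i^{a_i-1}$, the numerators are $\bigl(\prod_i\ell(p_i)\bigr)\bigl(\prod_i p_i^{b_i}\bigr)$ and $\bigl(\prod_i(p_i-1)\bigr)\bigl(\prod_i p_i^{a_i-1}\bigr)$; dividing the latter by $n = \prod_i p_i^{a_i}$ turns it into $\prod_{p\mid n}(1-1/p)$ times the numerator of $\lambda(n)$'s formula over $\prod_i p_i^{a_i}$. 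After cancelling the common factor $\prod_i\ell(p_i)$, the desired inequality $\ell(n)\ge\frac{\lambda(n)}{n}\prod_{p\mid n}\ell(p)$ is seen to be equivalent to
\[ \Bigl(\prod_{i}p_i^{b_i}\Bigr)\prod_{i=2}^{k}\gcd\Bigl(\lambda(p_i^{a_i}),\ \lcm_{j<i}\lambda(p_j^{a_j})\Bigr) \ \ge\ \Bigl(\prod_{p\mid n}\bigl(1-\tfrac1p\bigr)\Bigr)\prod_{i=2}^{k}\gcd\Bigl(\ell(p_i^{a_i}),\ \lcm_{j<i}\ell(p_j^{a_j})\Bigr). \]

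This final inequality is immediate: $\prod_i p_i^{b_i}\ge 1\ge\prod_{p\mid n}(1-1/p)$, and, since $\ell(p_j^{a_j})\mid\lambda(p_j^{a_j})$ for every $j$, each gcd on the right divides the corresponding gcd on the left, so the product of gcds on the left dominates that on the right factor by factor. I do not anticipate a genuine obstacle here; the one ingredient needing a little care is structural fact (ii) about $\ell(p^a)$ (a standard lifting-the-exponent computation), together with keeping the bookkeeping in the telescoping identity straight.
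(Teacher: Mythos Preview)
Your argument is correct. The paper does not give its own proof of this proposition; it simply cites Kurlberg--Rudnick \cite{KR01} and Kurlberg--Pomerance \cite{KP05}. So the comparison is against the argument in those references rather than anything in the paper itself.

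Your route---the telescoping identity $\lcm[m_1,\dots,m_k]=\prod_i m_i\big/\prod_{i\ge2}\gcd(m_i,\lcm_{j<i}m_j)$ applied in parallel to the $\ell(p_i^{a_i})$ and the $\lambda(p_i^{a_i})$, followed by the termwise comparison of gcd's---is sound, and the key structural input $\ell(p^a)=\ell(p)p^b$ is exactly right. The prose around ``dividing the latter by $n$'' is a bit garbled, but the displayed equivalent inequality is correct and the final justification (each gcd on the $\ell$-side divides the corresponding gcd on the $\lambda$-side, because $\ell(p_j^{a_j})\mid\lambda(p_j^{a_j})$) is valid.

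For comparison, the argument in \cite{KR01,KP05} is shorter and avoids the telescoping identity. One observes directly that if $B_i\mid A_i$ for each $i$, then
\[
\frac{\lcm_i A_i}{\lcm_i B_i}\ \le\ \prod_i\frac{A_i}{B_i},
\]
since for every prime $q$ one has $\max_i v_q(A_i)\le \max_i v_q(B_i)+\max_i v_q(A_i/B_i)\le \max_i v_q(B_i)+\sum_i v_q(A_i/B_i)$. Applying this with $A_i=\lambda(p_i^{a_i})$ and $B_i=\ell(p_i^{a_i})$ gives
\[
\frac{\lambda(n)}{\ell(n)}\ \le\ \prod_{p^a\parallel n}\frac{\lambda(p^a)}{\ell(p^a)}\ =\ \prod_{p^a\parallel n}\frac{(p-1)p^{a-1}}{\ell(p)\,p^{b}}\ \le\ \prod_{p^a\parallel n}\frac{p^a}{\ell(p)}\ =\ \frac{n}{\prod_{p\mid n}\ell(p)},
\]
which rearranges to the claim. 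Your approach and theirs use the same arithmetic facts about $\ell(p^a)$; the difference is purely in how the lcm bookkeeping is packaged.
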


In \cite{KP05}, it is proved that $\ell(n)$ is usually a bit larger than $n^{1/2}$, for almost all $n$. Specifically, if $\epsilon(x)$ is any function tending to $0$ as $x\to\infty$, then $\ell(n) > n^{1/2+\epsilon(n)}$ for all $n$ on a set of asymptotic density $1$. We need a variant of this result where the exceptional set is suitably small. Such a result is available if we are willing to slightly weaken the required lower bound on $\ell(n)$.

\begin{lem}\label{lem:finiterecipsum} Fix $\delta \in (0,\frac12)$. For all large $X$, the inequality $\ell(n) > n^{\frac12-\delta}$ holds for all odd $n\le X$ with at most $O(X/(\log{X})^2)$ exceptions.
\end{lem}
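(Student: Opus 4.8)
The plan is to combine Propositions \ref{prop:FPS} and \ref{prop:kplambdaell} with the GRH-conditional result of Lemma \ref{lem:GRHlem}, suitably repackaged. The starting point is the lower bound
\[ \ell(n) \ge \frac{\lambda(n)}{n} \prod_{p\mid n} \ell(p) \]
from Proposition \ref{prop:kplambdaell}. If $\lambda(n)$ is not too small and each $\ell(p)$ is not too small relative to $p$, then $\ell(n)$ will be nearly as large as $\prod_{p\mid n} p \ge \rad(n)$, which for typical squarefree-ish $n$ is close to $n$. So I would fix a small parameter (say $\eta = \eta(\delta) > 0$ to be chosen at the end) and declare $n \le X$ \emph{bad} if any of the following holds: (i) $\lambda(n) \le n\exp(-\Delta)$ for an appropriate threshold $\Delta$; (ii) $n$ has a ``large squarefull part,'' i.e. the squarefull part of $n$ exceeds $X^{\eta}$; or (iii) $n$ is divisible by an abnormal prime $p$ (in the sense of \S\ref{sec:sharpmax}, $\ell(p) \le p/(\log\log p)^2$) with $p > X^{\eta}$. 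For all other $n$ I will show $\ell(n) > n^{1/2-\delta}$ directly, and then I bound the number of bad $n$ by $O(X/(\log X)^2)$.

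For the main estimate on good $n$: write $n = s \cdot m$ where $s$ is the squarefull part and $m$ is squarefree with $\gcd(s,m)=1$; goodness gives $s \le X^{\eta}$. Split the primes dividing $m$ into the ``small'' ones (below $X^{\eta}$) and the ``large'' ones (above $X^{\eta}$); goodness via (iii) forces every large prime $p \mid m$ to be normal, so $\ell(p) > p/(\log\log p)^2 > p/(\log\log X)^2$. Bounding crudely, $\prod_{p \mid n} \ell(p) \ge \prod_{p \mid m,\, p > X^{\eta}} \ell(p) \ge \big(\prod_{p \mid m,\, p > X^{\eta}} p\big) / (\log\log X)^{2\omega(n)}$. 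Now $\prod_{p \mid m,\, p > X^{\eta}} p \ge m / \prod_{p \le X^{\eta}} p \ge m/\exp(2X^{\eta})$ (and $m \ge n/X^{\eta}$ by the squarefull bound), while $(\log\log X)^{2\omega(n)} \le X^{o(1)}$ since $\omega(n) \ll \log X/\log\log X$. Combined with $\lambda(n)/n \ge \exp(-\Delta)$, this yields $\ell(n) \ge n^{1 - o(1)} \cdot \exp(-\Delta - 2X^{\eta} - 2\eta\log X)$. Choosing $\Delta = (\log\log X)^3$ and $\eta$ small enough (say $\eta < \delta/4$) makes all the error exponents $o(\log X)$ plus at most $2\eta \log X < (\delta/2)\log X$, hence $\ell(n) > n^{1/2 - \delta}$ for $X$ large — provided also that $n$ is not too small, i.e. $n > X^{1/2}$, say; the finitely many... rather, the $n \le X^{1/2}$ can simply be absorbed, or one restricts attention to $n > \exp((\log X)^{1/2})$ and notes smaller $n$ contribute $O(X/(\log X)^2)$ trivially. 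I should double-check the interaction with Lemma \ref{lem:GRHlem}, whose hypothesis requires the parameter $Y \le \log X$; applying it with $Y = (\log\log X)^2$ is fine, and summing dyadically over $[X^{\eta}, X]$ shows the count of $n \le X$ divisible by some abnormal $p > X^{\eta}$ is $\ll \sum_{p > X^{\eta}\text{ abnormal}} X/p \ll X \sum_{p > X^{\eta}\text{ abnormal}} 1/p$, which is $o(X/(\log X)^2)$ since the reciprocal sum of abnormal primes converges and its tail past $X^{\eta}$ decays (one needs the slightly more precise statement that the count of abnormal $p \le t$ is $\ll t/(\log t (\log\log t)^2)$, already established in \S\ref{sec:sharpmax}).

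It remains to bound the exceptional sets (i) and (ii). Set (i): Proposition \ref{prop:FPS} with $\Delta = (\log\log X)^3 \ge (\log\log X)^3$ gives at most $X\exp(-0.69(\Delta\log\Delta)^{1/3}) = X\exp(-0.69((\log\log X)^3 \cdot 3\log\log\log X)^{1/3}) = X\exp(-c\log\log X (\log\log\log X)^{1/3})$, which is $\ll X/(\log X)^2$ — indeed it is $X/(\log X)^{\omega(1)}$. Set (ii): the number of $n \le X$ whose squarefull part exceeds $X^{\eta}$ is $\le \sum_{s \text{ squarefull},\, s > X^{\eta}} X/s \ll X \cdot (X^{\eta})^{-1/2} = X^{1-\eta/2}$, again much smaller than $X/(\log X)^2$. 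Adding the three contributions gives the claimed $O(X/(\log X)^2)$. The main obstacle — or at least the point requiring the most care — is pinning down the numerology: one must verify that the chosen $\Delta$ is simultaneously large enough for the error term $\exp(-\Delta)$ in $\lambda(n)/n$ to be harmless on the "good" side (it is, since $\exp(-(\log\log X)^3) = X^{-o(1)}$... wait, that's the wrong direction — $\exp(-(\log\log X)^3)$ is much larger than any negative power of $X$, so it is indeed $X^{o(1)}$ and harmless), yet the resulting exceptional count $X\exp(-0.69(\Delta\log\Delta)^{1/3})$ stays below $X/(\log X)^2$; both hold comfortably for this $\Delta$, but the bookkeeping linking $\eta$, $\delta$, and the various $o(\log X)$ terms is where an error could creep in.
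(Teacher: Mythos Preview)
Your argument has a genuine gap in the handling of the small primes. You write
\[
\prod_{\substack{p\mid m \\ p > X^{\eta}}} p \;\ge\; \frac{m}{\prod_{p \le X^{\eta}} p} \;\ge\; \frac{m}{\exp(2X^{\eta})},
\]
which is correct but fatal: by the prime number theorem $\prod_{p\le X^{\eta}} p = \exp((1+o(1))X^{\eta})$, and for any fixed $\eta>0$ the exponent $2X^{\eta}$ is \emph{not} $o(\log X)$ but rather grows like a power of $X$. So the claimed inequality $\ell(n)>n^{1/2-\delta}$ does not follow for good $n$; the ``error exponent'' you later record as $2\eta\log X$ should have been $2X^{\eta}$, which swamps everything. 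The underlying issue is that you have no control over the $X^{\eta}$-smooth part of $n$: for a positive proportion of $n\le X$ this smooth part exceeds $X^{c}$ for any fixed $c$ (with positive probability depending on $c,\eta$), so there is no cheap way to discard these $n$ within an $O(X/(\log X)^{2})$ budget.

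There is a second, independent gap in your bound for set~(iii). The tail $\sum_{p>X^{\eta}\text{ abnormal}}1/p$ is only $\ll_{\eta} 1/\log\log X$ (partial summation from the estimate $\#\{p\le t:\ p\text{ abnormal}\}\ll t/(\log t\,(\log\log t)^{2})$), so the number of $n\le X$ divisible by some abnormal prime $>X^{\eta}$ is at best $O(X/\log\log X)$, not $O(X/(\log X)^{2})$.

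Finally, note that the lemma is stated unconditionally, while your use of ``abnormal'' primes imports GRH via Lemma~\ref{lem:GRHlem}. The paper's proof avoids GRH entirely by using a much weaker notion of ``bad'' prime, namely $\ell(p)\le p^{1/2-\delta/2}$; the elementary observation that $2^{m}-1$ has fewer than $m$ prime factors gives $\#\{p\le T:\ p\text{ bad}\}\le T^{1-\delta}$ with no hypotheses. One then shows by a second-moment computation that $\sum_{p\mid n,\ p\text{ bad}}\log p < \tfrac{1}{2}\delta\log X$ for all but $O(X/(\log X)^{2})$ values of $n$, which replaces your small/large split and makes the numerology close. With the good primes satisfying $\ell(p)>p^{1/2-\delta/2}$, Proposition~\ref{prop:kplambdaell} together with Proposition~\ref{prop:FPS} (applied with $\Delta\asymp\delta\log X$, not $(\log\log X)^{3}$) then yields $\ell(n)>n^{1/2-\delta}$ directly.
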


\begin{proof} Clearly, we can restrict attention to  $$ n > X/(\log{X})^2. $$ Proposition \ref{prop:FPS} (with $\Delta = \frac{1}{4}\delta\log{x}$) licenses us to assume that
\[ \lambda(n) \ge n \cdot X^{-\frac{1}{4}\delta}.\]
Using $\rad(n)$ for the largest squarefree divisor of $n$, we can also assume that
\[ \frac{n}{\rad(n)} \le (\log{n})^4. \]
Indeed, if this inequality fails, let $u:=n/\rad(n)$. Then $u\cdot \rad(u)$ is a squarefull divisor of $n$ with $u \cdot \rad(u) > 2 (\log{n})^4 > (\log{X})^4$. But the number of $n\le X$ with a squarefull divisor exceeding $(\log{X})^4$ is $O(X/(\log{X})^2)$, which is acceptable for us.

To continue, call the odd prime $p$ \textsf{good} if $\ell(p) > p^{\frac{1}{2}-\frac{1}{2}\delta}$ and \textsf{bad} otherwise. Note that if $p$ is bad, and $p\le T$, then $p$ divides $2^m-1$ for some $m \le T^{\frac{1}{2}-\frac{1}{2}\delta}$. Since each Mersenne number $2^m-1$ has fewer than $m$ distinct prime factors, the number of bad primes $p\le T$ is at most
\[ \sum_{m \le T^{\frac{1}{2}-\frac{1}{2}\delta}} m \le T^{1-\delta},  \]
for every real number $T\ge 1$.

Next, we show that the product of the bad primes dividing $n$ is usually small. Observe that
\begin{align*} \sum_{n\le X} \bigg(\sum_{\substack{p \mid n \\ p\text{ bad}}} \log{p}\bigg)^2 &\le \sum_{p\le X\text{ bad}} (\log{p})^2 \sum_{\substack{n\le X \\p\mid n}} 1 + \sum_{\substack{p, q\le X\text{ bad} \\ p \ne q}} \log{p}\log{q} \sum_{\substack{n \le X \\ pq\mid n}}1 \\
&\le X\left(\sum_{p\text{ bad}} \frac{(\log{p})^2}{p} + \left(\sum_{p\text{ bad}}\frac{\log{p}}{p}\right)^2 \right) \ll X.
\end{align*}
We use in the final step that the sums over bad primes are convergent, which follows by partial summation from the results of the last paragraph. Hence, by Markov's inequality,
\begin{equation}\label{eq:smallradnbad} \sum_{\substack{p \mid n \\ p\text{ bad}}} \log{p} < \frac{1}{2}\delta \log{X} \end{equation}
for all but $O(X/(\log{X})^2)$ integers $n\le X$. (The implied constant depends on $\delta$.)

Now suppose that $n$ is an odd number not exceeding $X$ and that $n$ is not one of the $O(X/(\log{X})^2)$ exceptional integers above. Write $n_{\text{bad}}$ for the largest divisor of $n$ composed entirely of bad primes, and define $n_{\text{good}}$ analogously. Then
\[ \ell(n) \ge \frac{\lambda(n)}{n} \prod_{\substack{p\text{ good} \\ p \mid n}} \ell(p) \ge X^{-\frac{1}{4}\delta} \prod_{\substack{p\text{ good} \\ p \mid n}} p^{\frac{1}{2}-\frac{1}{2}\delta} = X^{-\frac{1}{4}\delta} \cdot \rad(n_{\text{good}})^{\frac{1}{2}-\frac{1}{2}\delta}.   \]
Since
\[ \frac{n_{\text{good}}}{\rad(n_{\text{good}})}, \frac{n_{\text{bad}}}{\rad(n_{\text{bad}})} \le \frac{n}{\rad(n)} \le (\log{X})^4,  \]
\eqref{eq:smallradnbad} yields
\[ n_{\text{bad}} \le (\log{X})^4 \cdot \rad(n_{\text{bad}}) \le (\log{X})^{4} X^{\frac{1}{2}\delta}, \]
so that
\[ \rad(n_{\text{good}}) \ge \frac{n_{\text{good}}}{(\log{X})^4} = \frac{n/n_{\text{bad}}}{(\log{X})^4} \ge n X^{-\frac{1}{2}\delta} (\log{X})^{-8} \ge X^{1-\frac{1}{2}\delta} (\log{X})^{-10}. \]
Here the final inequality is justified by our assumption that $n> X/(\log{X})^2$. Putting this back above, we find that
\begin{align*} \ell(n) &\ge X^{-\frac{1}{4}\delta} \cdot X^{(1-\frac{1}{2}\delta)(\frac{1}{2}-\frac{1}{2}\delta)} (\log{X})^{-5} \\
&\ge X^{\frac{1}{2}-\delta} \ge n^{\frac{1}{2}-\delta},
\end{align*}
thereby completing the proof.
\end{proof}

The final piece of preparation needed for the proof of Theorem \ref{thm:1andthreequarters} is a theorem of Kurlberg (see \cite[Theorem 1]{kurlberg03}).

\begin{prop}[conditional on GRH]\label{prop:kurlberg} Fix $\epsilon \in (0,1)$. Then $\ell(n) > n^{1-\epsilon}$ for all odd $n$ except those belonging to a set of asymptotic density $0$.
\end{prop}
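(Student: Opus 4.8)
The plan is to exploit the elementary inequality $\ell(n)\ge \frac{\lambda(n)}{n}\prod_{p\mid n}\ell(p)$ of Proposition \ref{prop:kplambdaell}, and to show that for almost all odd $n$ both factors on the right are $X^{-o(1)}$ times a power of $n$ very close to $1$. Fix $\epsilon\in(0,1)$, let $X$ be large, and consider odd $n\le X$; it suffices to prove $\ell(n)>n^{1-\epsilon}$ for all such $n$ outside a set of size $o(X)$, as this is precisely the density-$0$ assertion. We repeatedly discard subsets of $\{n\le X\}$ of size $o(X)$: we discard $n\le X/(\log X)^2$ (so henceforth $n>X/(\log X)^2$); via Proposition \ref{prop:FPS} with $\Delta=\tfrac14\epsilon\log X$ we discard the $n$ with $\lambda(n)\le nX^{-\epsilon/4}$, the exceptional count $X\exp(-0.69(\Delta\log\Delta)^{1/3})$ being $o(X)$; and, by the standard bound on integers with a large squarefull divisor (exactly as in the proof of Lemma \ref{lem:finiterecipsum}), we discard the $n$ possessing a squarefull divisor exceeding $(\log X)^4$, so that henceforth $n/\rad(n)\le(\log X)^4$. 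On the surviving set we have $\lambda(n)/n\ge X^{-\epsilon/4}$ and $\rad(n)\ge n/(\log X)^4\ge X^{1-o(1)}$.

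GRH enters through the abnormal primes of \S\ref{sec:sharpmax}. Recall that, assuming GRH, Lemma \ref{lem:GRHlem} gives $\sum_{p\text{ abnormal}}1/p<\infty$, a prime $p$ being abnormal when $\ell(p)\le p/(\log\log p)^2$. Put $z=z(X)=\log\log X$. The number of $n\le X$ divisible by an abnormal prime exceeding $z$ is at most $X\sum_{p>z,\ p\text{ abnormal}}1/p=o(X)$, so we may assume every abnormal prime factor of $n$ is $\le z$. Writing $n_1=\prod_{p\mid n,\ p\le z}p^{v_p(n)}$ for the $z$-smooth part of $n$, the first-moment bound $\sum_{n\le X}\log n_1\le X\sum_{p\le z}\tfrac{\log p}{p-1}\ll X\log z$ together with Markov's inequality lets us discard the $o(X)$ integers with $\log n_1>(\log z)^2$; on what remains, $n_1\le\exp((\log\log\log X)^2)=X^{o(1)}$.

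Now fix an $n$ in the resulting density-$1$ set and write $n_2$ for the part of $n$ built from primes exceeding $z$. Every prime $p\mid n_2$ is normal, so $\ell(p)>p/(\log\log p)^2$, and therefore
\[
\prod_{p\mid n}\ell(p)\ \ge\ \prod_{p\mid n_2}\frac{p}{(\log\log p)^2}\ =\ \frac{\rad(n_2)}{\prod_{p\mid n_2}(\log\log p)^2}\ \ge\ \rad(n_2)\,(\log\log X)^{-2\omega(n)}\ \ge\ \rad(n_2)\,X^{-o(1)},
\]
where the last step uses the uniform bound $\omega(n)\ll\log X/\log\log X$ (p.\ 471 of \cite{HW08}). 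Since $\rad(n_2)=\rad(n)/\rad(n_1)\ge\rad(n)/n_1\ge X^{1-o(1)}/X^{o(1)}=X^{1-o(1)}$, we get $\prod_{p\mid n}\ell(p)\ge X^{1-o(1)}$. Combining this with $\lambda(n)/n\ge X^{-\epsilon/4}$ via Proposition \ref{prop:kplambdaell},
\[
\ell(n)\ \ge\ \frac{\lambda(n)}{n}\prod_{p\mid n}\ell(p)\ \ge\ X^{-\epsilon/4}\cdot X^{1-o(1)}\ =\ X^{1-\epsilon/4-o(1)}\ >\ X^{1-\epsilon}\ \ge\ n^{1-\epsilon}
\]
for all large $X$, which is the desired conclusion.

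The one genuinely substantive input is the convergence of $\sum_{p\text{ abnormal}}1/p$, i.e.\ the rarity of primes $p$ for which $\ell(p)$ is much smaller than $p$; this is exactly where GRH is needed, through Lemma \ref{lem:GRHlem}. Proposition \ref{prop:kplambdaell} and Proposition \ref{prop:FPS} are unconditional, as are the sieve-type estimates controlling the radical and the smooth part of a typical $n$. I expect the main work in a careful write-up to be purely bookkeeping: verifying that each discarded family is genuinely of size $o(X)$ and that the various $X^{o(1)}$ losses are small enough that the final exponent stays above $1-\epsilon$ — which is why we take $\lambda(n)/n\ge X^{-\epsilon/4}$, with room to spare, rather than something closer to $X^{-\epsilon}$.
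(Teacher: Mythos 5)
The paper does not actually prove Proposition~\ref{prop:kurlberg}: it is cited verbatim as Kurlberg's Theorem 1 in \cite{kurlberg03}, and no argument is given. Your proof is therefore not ``the same as the paper's'' because there is no proof in the paper to compare with, but it is correct, and what you have done is supply a self-contained derivation entirely from tools already present elsewhere in the paper. Concretely, you combine the Kurlberg--Rudnick inequality (Proposition~\ref{prop:kplambdaell}) and the Friedlander--Pomerance--Shparlinski bound on $\lambda(n)$ (Proposition~\ref{prop:FPS}) exactly as the authors do in their unconditional Lemma~\ref{lem:finiterecipsum}, and then replace that lemma's notion of ``bad'' prime (where one can only prove $\ell(p) > p^{1/2-\delta/2}$ unconditionally) with the paper's GRH-dependent ``abnormal'' prime from \S2.3, where Lemma~\ref{lem:GRHlem} yields $\sum_{p \text{ abnormal}} 1/p < \infty$. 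Since the normal primes satisfy the much stronger inequality $\ell(p) > p/(\log\log p)^2$, the resulting exponent jumps from $1/2-\delta$ to $1-\epsilon$, which is precisely the content of the proposition.

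Each of your discards is quantitatively justified: the squarefull-divisor and first-moment/Markov steps are standard and correctly computed; the FPS exceptional count with $\Delta = \tfrac14\epsilon\log X$ is indeed $o(X)$ (it is even $X\exp(-c(\log X\log\log X)^{1/3})$); and the abnormal-prime discard uses a tail $\sum_{p>z,\,\text{abnormal}}1/p \to 0$ as $z=\log\log X\to\infty$. The bookkeeping in the final chain of inequalities also works out: $(\log\log X)^{2\omega(n)} = X^{o(1)}$ by the maximal order of $\omega$, and $\rad(n_2)\ge X^{1-o(1)}$ because $n>X/(\log X)^2$, $n/\rad(n)\le(\log X)^4$, and $n_1\le X^{o(1)}$. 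The cushion $\lambda(n)/n\ge X^{-\epsilon/4}$ then comfortably absorbs all the $X^{o(1)}$ losses. This is a valid and, in spirit, quite faithful reconstruction of how one proves Kurlberg's theorem; the main observation worth recording is that nothing beyond the paper's own stated propositions is needed.
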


\begin{proof}[Proof of Theorem \ref{thm:1andthreequarters}] In view of the representation $\sum_{m \le x} E(m) = \sum_{\ell(d) \le x} 1/d$, it is enough to prove that for each $\epsilon > 0$ and all large $x$,
\begin{equation}\label{eq:toprove} \sum_{\substack{d > x \\ \ell(d) \le x}}\frac{1}{d} < \epsilon \log{x}.  \end{equation}
Certainly
\[ \sum_{\substack{x< d < x^{1+\frac{1}{2}\epsilon} \\ \ell(d) \le x}}\frac{1}{d} \le \sum_{x < d < x^{1+\frac{1}{2}\epsilon}}\frac{1}{d} < \frac{2}{3}\epsilon \log{x}\]
for all large $x$. To handle the $d$ with $x^{1+\frac{1}{2}\epsilon} < d \le x^3$, we appeal to Proposition \ref{prop:kurlberg}. If $d$ is in this range, and $\ell(d)\le x$, then $\ell(d) < d^{\delta}$ with $\delta = (1+\frac{1}{2}\epsilon)^{-1}$. By Proposition \ref{prop:kurlberg}, the count of $d\le X$ with $\ell(d) < d^{\delta}$ is $o(X)$, as $X\to\infty$, and now partial summation yields
\[ \sum_{\substack{x^{1+\frac{1}{2}\epsilon} < d \le x^3 \\ \ell(d) \le x}}\frac{1}{d}\le \sum_{\substack{x^{1+\frac{1}{2}\epsilon} < d \le x^3 \\ \ell(d) < d^{\delta}}}\frac{1}{d} = o(\log{x}), \]
as $x\to\infty$. Finally, if $d > x^3$ and $\ell(d) \le x$, then $\ell(d)< d^{1/3}$. But the $d$ with $\ell(d) < d^{1/3}$ comprise a set with finite reciprocal sum, by Lemma \ref{lem:finiterecipsum} and partial summation. Hence,
\[ \sum_{\substack{d > x^3 \\ \ell(d) \le x}}\frac{1}{d} \le \sum_{\substack{d > x^3 \\ \ell(d) < d^{1/3}}}\frac{1}{d} = o(1),  \]
as $x\to\infty$. Combining what was shown in the last few displays yields \eqref{eq:toprove}.
\end{proof}

\begin{rmk} Fix an integer $a\ge 2$, and let $\ell_{a}(d)$ denote the order of $a$ modulo $d$, where $d$ is assumed coprime to $a$. On p.\ 376 of \cite{MRS96}, Murty, Rosen, and Silverman observe that
\[ \sum_{\substack{d\ge 1 \\ \gcd(a,d)=1}} \frac{1}{d \cdot \ell_a(d)^{\epsilon}} \ge \sum_{\substack{d\ge 1 \\ \gcd(a,d)=1}} \frac{1}{d^{1+\epsilon}} = \zeta(1+\epsilon) \cdot \prod_{p\mid a} \left(1-\frac{1}{p^{1+\epsilon}}\right),  \]
and thus $\liminf_{\epsilon \downarrow 0} \epsilon \sum 1/d\ell_a(d)^{\epsilon} \ge \phi(a)/a$. They remark that ``it would be interesting to compute the exact value [of the limit] \dots if the limit exists''. Under GRH, we can show that this limit is exactly $\phi(a)/a$. To see the connection with our work, let $S_a(x) = \sum_{\ell_a(d) \le x} 1/d$, and observe that
\[ \sum_{\substack{d\ge 1 \\ \gcd(a,d)=1}} \frac{1}{d \cdot \ell_a(d)^{\epsilon}} = \int_{1^{-}}^{\infty} t^{-\epsilon} \, \mathrm{d}S_a(t). \]
That the limit in question is $\phi(a)/a$ now follows in a straightforward way from the asymptotic relation $S_a(x) \sim \frac{\phi(a)}{a} \log{x}$, as $x\to\infty$.\footnote{Conversely,  if $\epsilon \sum 1/d\ell_a(d)^{\epsilon} \to K$ as $\epsilon\downarrow 0$, then $S_a(x) = (K+o(1))\log{x}$, as $x\to\infty$. This follows from a Tauberian theorem of Hardy and Littlewood for Dirichlet series with nonnegative coefficients \cite[Theorem 16]{HL14}.} When $a=2$, that relation is exactly the assertion of Theorem \ref{thm:1andthreequarters}, while the proof for general $a$ is entirely analogous.
\end{rmk}

\section{Continuity of distribution functions: Proof of Theorem \ref{thm:main2}}

The following result in arithmetic combinatorics is the principal theorem of Pomerance and Sark\H{o}zy's paper \cite{PS88}.

\begin{prop}\label{prop:PSa} There are absolute constants $c_1$ and $N_1$ such that, if $N$ is a positive integer with $N\ge N_1$, $\Pp$ is a set of prime numbers not exceeding $N$ with
\[ \sum_{p \in \Pp} \frac{1}{p} > c_1, \]
$\A \subset \{1,2,\dots,N\}$, and
\begin{equation*}%\label{eq:PSassumption}
\sum_{a \in \A} \frac{1}{a} > 10 (\log{N}) \left(\sum_{p \in \Pp}\frac{1}{p}\right)^{-1/2}, \end{equation*}
then there exist integers $a < a'$ in $\mathcal{A}$ for which $a\mid a'$ and
\[ \frac{a'}{a} \mid \prod_{p \in \Pp} p. \]
\end{prop}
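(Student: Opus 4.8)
The plan is to argue by contradiction: suppose $\A\subseteq\{1,\dots,N\}$ contains no pair $a<a'$ with $a\mid a'$ and $a'/a\mid\prod_{p\in\Pp}p$, and deduce that $\sum_{a\in\A}1/a$ cannot be as large as the hypothesis claims. Write $\mathcal{D}=\{d\ge1 : d\mid\prod_{p\in\Pp}p\}$ for the set of squarefree integers supported on $\Pp$, and put $P=\sum_{p\in\Pp}1/p$. The observation I would build on is that the ``no good pair'' hypothesis is a disguised antichain condition: for every $n\le N$ the set
\[
R(n) := \{d\in\mathcal{D} : d\mid n,\ n/d\in\A\}
\]
is an antichain under divisibility, since two distinct comparable members $d\mid d'$ would give $n/d',n/d\in\A$ with $n/d'\mid n/d$ and quotient $(n/d)/(n/d')=d'/d\in\mathcal{D}\setminus\{1\}$, a forbidden pair. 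As $R(n)$ lies in the divisor lattice of $\rad_\Pp(n):=\prod_{p\in\Pp,\,p\mid n}p$ --- a Boolean lattice of rank $\omega_\Pp(n):=\#\{p\in\Pp:p\mid n\}$ --- Sperner's theorem yields
\[
|R(n)|\le\binom{\omega_\Pp(n)}{\lfloor\omega_\Pp(n)/2\rfloor}\ll\frac{2^{\omega_\Pp(n)}}{\sqrt{1+\omega_\Pp(n)}}.
\]
This square root is what will manufacture the factor $P^{-1/2}$.

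I would then double-count the weighted number of ``multiplicative thickenings'' of $\A$ by $\mathcal{D}$,
\[
T := \sum_{a\in\A}\frac1a\sum_{\substack{d\in\mathcal{D}\\ d\le N/a}}\frac1d \;=\; \sum_{n\le N}\frac{|R(n)|}{n}.
\]
The upper bound $T\ll(\log N)\,e^{P}/\sqrt{P}$ is routine: from the elementary Euler product estimate $\sum_{n\le N}z^{\omega_\Pp(n)}/n\ll(\log N)\,e^{(z-1)P}$ (valid for $0\le z\le2$, using Mertens' theorem), together with $\binom{k}{\lfloor k/2\rfloor}\ll\int_{-\pi/2}^{\pi/2}(2\cos\theta)^k\,\mathrm{d}\theta$, one interchanges sum and integral and estimates the $\theta$-integral by Laplace's method.

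Here is the crux, and the step I expect to be the genuine obstacle. The only obvious lower bound, $T\ge\sum_{a\in\A}1/a$ (the terms with $d=1$), gives only $\sum_{a\in\A}1/a\ll(\log N)e^{P}/\sqrt{P}$ --- the right shape, but weaker than the proposition by a whole factor of $e^{P}$. One therefore needs a lower bound $T\gg(\text{a quantity of size }e^{P})\cdot\sum_{a\in\A}1/a$; put differently, one must show that enlarging $\A$ to $\{ad : a\in\A,\ d\in\mathcal{D},\ ad\le N\}$ multiplies the reciprocal sum by an amount of order $\prod_{p\in\Pp}(1+1/p)\asymp e^{P}$. This is immediate if a definite proportion of $\sum_{a\in\A}1/a$ is carried by small $a$ and if $\mathcal{D}\cap[1,\sqrt{N}]$ has reciprocal mass of order $e^{P}$, but neither is automatic: $\A$ could concentrate on integers near $N$, and the reciprocal mass of $\mathcal{D}$ could sit almost entirely on large $d$. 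Resolving this seems to require localizing the argument to logarithmic (dyadic) scales of $a$ and $d$ at which $\A$ is dense while plenty of shifts in $\mathcal{D}$ remain usable, and then averaging or iterating over scales.

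This localization is essentially the delicate part of the Erd\H{o}s--S\'ark\"ozy--Szemer\'edi estimate that a primitive subset of $[1,N]$ has reciprocal sum $\ll(\log N)/\sqrt{\log\log N}$ --- a statement that Proposition~\ref{prop:PSa} refines (take $\Pp$ to be all primes up to $N$). Once the localization is carried out, the remaining tasks --- executing the elementary estimates above and tracking the absolute constants so as to reach the stated value $10$ --- should be routine.
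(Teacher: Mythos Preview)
The paper does not prove Proposition~\ref{prop:PSa}; it is quoted without proof as ``the principal theorem of Pomerance and S\'ark\H{o}zy's paper \cite{PS88}'' and then applied as a black box to deduce Lemma~\ref{lem:PSlem}. So there is no proof in this paper to compare your attempt against.

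That said, your sketch is a faithful outline of the argument in \cite{PS88}: the antichain interpretation of the ``no good pair'' hypothesis, the Sperner bound $|R(n)|\ll 2^{\omega_{\Pp}(n)}/\sqrt{1+\omega_{\Pp}(n)}$, and the double count $T=\sum_{n\le N}|R(n)|/n$ are exactly the ingredients used there. You are also right that the serious work lies in the lower bound for $T$, and that this is where the Erd\H{o}s--S\'ark\H{o}zy--Szemer\'edi localization machinery enters; your sketch correctly flags this as the missing step rather than glossing over it. If you intend to supply a full proof, that localization (splitting $a$ and $d$ into logarithmic scales and finding a scale where both $\A$ and $\mathcal{D}$ carry enough reciprocal mass) is indeed what must be written out, and \cite{PS88} is the place to look for the details.
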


We distill what we need from Proposition \ref{prop:PSa} into our next lemma (cf.  \cite[Corollary 2]{PS88}).

\begin{lem}\label{lem:PSlem} Let $\delta > 0$, and let $\mathcal{A}$ be a set of integers with upper logarithmic density exceeding $\delta$. Then there are numbers $a< a'$ in $\mathcal{A}$ for which $a\mid a'$ and $a'/a \ll_{\delta} 1$.
\end{lem}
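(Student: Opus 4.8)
The plan is to obtain the lemma as an essentially immediate consequence of Proposition~\ref{prop:PSa}; the only real work is to choose the prime set $\Pp$ so that the quantitative threshold appearing there is comfortably beaten. We may assume $\A$ consists of positive integers. By the definition of upper logarithmic density, there is an unbounded set of values of $N$ for which
\[ \sum_{\substack{a\in\A \\ a\le N}} \frac1a > \delta \log N. \]
These are the $N$ to which I would apply Proposition~\ref{prop:PSa}.

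First I would fix, once and for all, a constant $K=K(\delta)$ with $K>c_1$ and $10K^{-1/2}<\delta$ — for instance any $K>\max\{c_1,\,100/\delta^2\}$ works. Since $\sum_{p\le z}1/p\to\infty$ as $z\to\infty$ (Mertens' theorem), there is a $z_0=z_0(\delta)$ with $\sum_{p\le z_0}1/p>K$. Set $\Pp=\{p \text{ prime}:p\le z_0\}$ and $Q=\prod_{p\le z_0}p$; both depend only on $\delta$. Now for each $N$ in our unbounded set with $N\ge\max\{N_1,z_0\}$ (here $N_1$ is the absolute constant from Proposition~\ref{prop:PSa}), apply that proposition with this $\Pp$ and with $\A$ replaced by $\A\cap[1,N]$. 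All hypotheses hold: $\Pp$ is a set of primes not exceeding $N$ with $\sum_{p\in\Pp}1/p>K>c_1$, and
\[ \sum_{\substack{a\in\A\cap[1,N]}}\frac1a > \delta\log N > 10K^{-1/2}\log N > 10(\log N)\Bigl(\sum_{p\in\Pp}\frac1p\Bigr)^{-1/2}, \]
the last step because $\sum_{p\in\Pp}1/p>K$. The proposition then produces $a<a'$ in $\A\cap[1,N]\subseteq\A$ with $a\mid a'$ and $a'/a\mid\prod_{p\in\Pp}p=Q$, hence $a'/a\le Q\ll_\delta 1$, as required.

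I do not anticipate any genuine obstacle here; the single point that needs care is the choice of $K$. The factor $10(\sum_{p\in\Pp}1/p)^{-1/2}$ in Proposition~\ref{prop:PSa} must be driven below $\delta$, and since the logarithmic density hypothesis only supplies a lower bound of the shape $\delta\log N$ for the reciprocal sum of $\A$ up to $N$, one cannot use a fixed finite $\Pp$ — one must let $\Pp$ range over all primes up to a threshold depending on $\delta$, large enough that $\sum_{p\in\Pp}1/p$ exceeds the constant $K(\delta)$ above. Everything else is bookkeeping.
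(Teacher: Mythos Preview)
Your proof is correct and follows essentially the same approach as the paper's: both fix a finite set $\Pp$ of small primes, depending only on $\delta$, large enough that $\sum_{p\in\Pp}1/p>c_1$ and $10(\sum_{p\in\Pp}1/p)^{-1/2}<\delta$, and then apply Proposition~\ref{prop:PSa} at some large $N$ furnished by the upper logarithmic density hypothesis. The only cosmetic difference is that the paper takes $\Pp$ to be the first $k$ primes (with $k=k(\delta)$ minimal) rather than introducing your intermediate constant $K$ and threshold $z_0$.
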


\begin{proof} Let $\Pp$ consist of the first $k$ primes, where $k=k(\delta)$ is chosen minimally to make $\sum_{p \in \Pp}\frac{1}{p} > c_1$ and $10(\sum_{p \in \Pp} \frac{1}{p})^{-1/2} < \delta$. Since $\mathcal{A}$ has upper logarithmic density $> \delta$, there are arbitrarily large $N$ for which
\[ \sum_{\substack{a \in \A \\ a\le N}} \frac{1}{a} > 10 (\log{N}) \left(\sum_{p \in \Pp}\frac{1}{p}\right)^{-1/2}. \] Proposition \ref{prop:PSa} now yields the existence of $a<a'$ in $\mathcal{A}$ where $a \mid a'$ and $\frac{a'}{a} \le \prod_{p \in \Pp}p \ll_{\delta} 1$.
\end{proof}

We also need a classical result of Bang on primitive prime divisors of Mersenne numbers (\cite{bang86}; see also \cite{roitman97}).

\begin{lem} For each integer $m>1$, $m \ne 6$, there is a prime $p$ for which $\ell(p)=m$.
\end{lem}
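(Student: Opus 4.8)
The plan is to prove the lemma in its classical form (Bang's theorem): for every integer $m>1$ with $m\ne 6$, the Mersenne number $2^m-1$ has a \emph{primitive} prime divisor, i.e.\ a prime $p$ dividing $2^m-1$ but not $2^k-1$ for any $0<k<m$. Such a $p$ satisfies $\ell(p)=m$, which is exactly the assertion. I would give the standard cyclotomic-polynomial argument. Writing $\Phi_d(X)$ for the $d$-th cyclotomic polynomial and $v_p$ for the $p$-adic valuation, the starting point is the factorization $2^m-1=\prod_{d\mid m}\Phi_d(2)$, so it suffices to produce, for each admissible $m$, a prime $p\mid\Phi_m(2)$ with $\ell(p)=m$.

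The crux is a dichotomy for the prime divisors of $\Phi_m(2)$. Fix an odd prime $p\mid\Phi_m(2)$ (every prime factor of $2^m-1$ is odd) and set $d=\ell(p)$, so $d\mid m$. Combining the lifting-the-exponent identity $v_p(2^{de}-1)=v_p(2^d-1)+v_p(e)$ with the Möbius formula $\Phi_m(2)=\prod_{e\mid m}(2^e-1)^{\mu(m/e)}$, a short computation gives
\[
v_p(\Phi_m(2))=\begin{cases} v_p(2^m-1), & m=d,\\[1mm] 1, & m/d=p^{a}\ \text{for some}\ a\ge 1,\\[1mm] 0, & \text{otherwise}.\end{cases}
\]
Hence either $\ell(p)=m$ (a primitive divisor), or $p\mid m$ and $v_p(\Phi_m(2))=1$. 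In the second case $m=\ell(p)\,p^{a}$ with $\ell(p)\mid p-1$, which forces $v_p(m)=a$ and shows that $p$ must be the \emph{largest} prime factor of $m$; in particular at most one prime of this ``imprimitive'' type can divide $\Phi_m(2)$.

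Now suppose for contradiction that $2^m-1$ has no primitive prime divisor. Then every prime factor of $\Phi_m(2)$ is imprimitive, so by the uniqueness just noted $\Phi_m(2)=1$ or $\Phi_m(2)=p$ for a single prime $p\mid m$; either way $\Phi_m(2)\le m$. On the other hand, from
\[
\log\Phi_m(2)=\phi(m)\log 2+\sum_{d\mid m}\mu(m/d)\log\!\left(1-2^{-d}\right)
\]
and $\bigl|\sum_{d\mid m}\mu(m/d)\log(1-2^{-d})\bigr|\le\sum_{d\ge 1}\bigl|\log(1-2^{-d})\bigr|=-\log\prod_{d\ge1}(1-2^{-d})<\tfrac32$, one gets $\Phi_m(2)>2^{\phi(m)}/5$. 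Since $\phi(m)\ge\sqrt{m}$ for all $m\ge 7$, this yields $\Phi_m(2)>m$ once $m$ exceeds an explicit small bound, and the finitely many remaining $m$ are handled by computing $\Phi_m(2)$ directly, which is $>m$ for every $m\ge 7$. This contradiction settles all $m\ge 7$; the cases $m\in\{2,3,4,5\}$ are immediate ($\Phi_m(2)=3,7,5,31$, with $\ell$-values $2,3,4,5$), and for $m=6$ indeed $2^6-1=3^2\cdot 7$ has no primitive prime divisor. Thus $m=6$ is the only exception.

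The step I expect to be the main obstacle is the valuation bookkeeping in the second paragraph — deriving the displayed formula for $v_p(\Phi_m(2))$ and in particular verifying that imprimitive primes occur to exactly the first power — together with making precise the explicit range of small $m$ that must be checked by hand in the size estimate; everything else is routine.
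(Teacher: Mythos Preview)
The paper does not prove this lemma at all; it is quoted as a classical result of Bang, with only a citation to \cite{bang86} and \cite{roitman97}. Your proposal supplies the standard cyclotomic-polynomial proof, and it is correct in all essentials: the displayed formula for $v_p(\Phi_m(2))$ follows exactly as you say from LTE combined with the M\"obius product, the observation that an imprimitive prime must be the largest prime factor of $m$ (hence unique) is right, and the lower bound $\Phi_m(2)>2^{\phi(m)}/5$ is valid. One cosmetic remark: invoking the crude inequality $\phi(m)\ge\sqrt m$ pushes the residual hand check out to roughly $m\lesssim 75$; if instead you use $2^{\phi(m)}>5m$ directly, only a handful of $m$ (namely $m\in\{8,10,12,18\}$) remain to be verified. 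Either way the argument goes through.
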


We can now prove that the distribution function of $f(n)$ is continuous.

\begin{proof}[Proof of Theorem \ref{thm:main2}, for $f(n)$]

 Let $D(u)$ be the distribution function of $f$, and suppose  for a contradiction that $D$ is not continuous. Let $\alpha$  be a discontinuity of $D$ (necessarily a jump discontinuity) and let $\delta = D(\alpha+)-D(\alpha-)$. Then $\delta > 0$ and for every $\epsilon > 0$,
\begin{equation*} D(\alpha+\epsilon)-D(\alpha-\epsilon) \ge \delta. \end{equation*}
Except possibly on a countable set of $\epsilon$ (specifically, those $\epsilon$ where one of $\alpha\pm \epsilon$ is a discontinuity of $D$), the left-hand difference can be interpreted as the density of $a$ with $f(a) \in (\alpha-\epsilon,\alpha+\epsilon)$. In particular, there are arbitrarily small positive $\epsilon$ where that density exists and is at least $\delta$. For these $\epsilon$, the corresponding logarithmic density also exists and is at least $\delta$.

The set of $a$ with squarefull part exceeding $K$ has upper logarithmic density bounded by $\sum_{\text{squarefull }m \ge K} \frac{1}{m}$, which is smaller than $\frac{1}{2}\delta$ if $K$ is fixed sufficiently large in terms of $\delta$. Putting this together with the conclusion of the last paragraph, we conclude that there are arbitrarily small positive $\epsilon$ for which
\[ \A = \{a: \text{$a$ has squarefull part at most $K$, and } f(a) \in (\alpha-\epsilon,\alpha+\epsilon)\} \]
has lower logarithmic density exceeding $\frac{1}{2}\delta$.  By Lemma \ref{lem:PSlem}, we may select $a < a'$ in $\mathcal{A}$ for which $a \mid a'$ and $\frac{a'}{a} \ll_{\delta} 1$. Take $q$ to be a prime appearing in $a'$ to a higher power than in $a$, and say that $q^e\parallel a'$. If $e > 1$, then $q^e \le K$, while if $e = 1$, then $q^e = q \mid a'/a$. Either way, $$q^e \ll_{\delta} 1.$$ Now notice that
\begin{equation*}%\label{eq:needbanghere}
f(a') - f(a) \ge \sum_{\ell(p) = q^e} \frac{1}{p}. \end{equation*}
By Bang's  theorem, there is at least one prime $p$ with $\ell(p) = q^e$, so the right-hand sum is nonempty. Moreover, if $p$ is any such prime, $p < 2^{q^e} \ll_{\delta} 1$, and so
\[ f(a') -f(a) \gg_{\delta} 1. \]
But $f(a), f(a') \in (\alpha-\epsilon,\alpha+\epsilon)$,  so that
\[ f(a') - f(a) < 2\epsilon. \]
The last two displays are incompatible for sufficiently small values of $\epsilon>0$.
\end{proof}

A bit of staring reveals that we have actually shown a more general result.

\begin{thm}\label{thm:genmain2} Suppose $a(n), b(n)$ are nonnegative-valued arithmetic functions connected by the relation
\begin{equation}\label{eq:abdef} a(n) = \sum_{d\mid n} b(d) \qquad\text{for all positive integers $n$}. \end{equation}
Suppose that $b(q^e)>0$ for every prime power $q^e$ {\rm (}including the case $e=1${\rm )}. Then for each real $\alpha$ and every $\delta>0$, there is an $\epsilon>0$ such that the upper logarithmic density of $n$ with $a(n) \in (\alpha-\epsilon,\alpha+\epsilon)$ is smaller than $\delta$. In particular, if $a(n)$ has a distribution function, that distribution function is continuous.
\end{thm}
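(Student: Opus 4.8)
The plan is to mimic, essentially verbatim, the argument just given for $f(n)$, replacing Bang's theorem (which guaranteed a \emph{prime} primitive divisor) by the hypothesis that $b(q^e)>0$ for every prime power. Fix $\alpha\in\R$ and $\delta>0$, and suppose for contradiction that for a sequence of arbitrarily small $\epsilon>0$ the set of $n$ with $a(n)\in(\alpha-\epsilon,\alpha+\epsilon)$ has upper logarithmic density at least $\delta$. First I would fix $K=K(\delta)$ large enough that $\sum_{\text{squarefull }m\ge K}\frac1m<\frac12\delta$, so that the set of $n$ whose squarefull part exceeds $K$ has upper logarithmic density below $\frac12\delta$; subtracting, the set
\[ \A=\{n: \text{$n$ has squarefull part at most $K$, and }a(n)\in(\alpha-\epsilon,\alpha+\epsilon)\} \]
still has upper logarithmic density exceeding $\frac12\delta$ for these $\epsilon$.

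Next I would apply Lemma~\ref{lem:PSlem} to $\A$ to obtain $a<a'$ in $\A$ with $a\mid a'$ and $a'/a\ll_\delta 1$. Pick a prime $q$ dividing $a'$ to strictly higher power than it divides $a$, say $q^e\parallel a'$; exactly as before, either $e>1$ and then $q^e\le K$, or $e=1$ and then $q=q^e\mid a'/a$, so in all cases $q^e\ll_\delta 1$. Now the key inequality: since $a(n)=\sum_{d\mid n}b(d)$ and $b\ge 0$, and since $q^e\mid a'$ but $q^e\nmid a$, we have
\[ a(a')-a(a)=\sum_{\substack{d\mid a'\\ d\nmid a}}b(d)\ge b(q^e). \]
By hypothesis $b(q^e)>0$. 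The only subtlety relative to the $f(n)$ proof is that $b(q^e)$ is not literally bounded below by a universal constant, but it \emph{is} bounded below in terms of $\delta$: there are only finitely many prime powers $q^e$ with $q^e\ll_\delta 1$, so $b(q^e)\ge\min\{b(r^j): r^j\le C(\delta)\}=:c(\delta)>0$. Hence $a(a')-a(a)\ge c(\delta)$, while on the other hand $a(a),a(a')\in(\alpha-\epsilon,\alpha+\epsilon)$ forces $a(a')-a(a)<2\epsilon$; choosing $\epsilon<\frac12 c(\delta)$ gives a contradiction. This proves the first assertion.

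For the final sentence, I would argue as follows. Suppose $a(n)$ has distribution function $D$ but $D$ has a jump at some $\alpha$, of size $\delta:=D(\alpha+)-D(\alpha-)>0$. Then for every $\epsilon>0$ we have $D(\alpha+\epsilon)-D(\alpha-\epsilon)\ge\delta$, and for all but countably many $\epsilon$ (those avoiding discontinuities of $D$ at $\alpha\pm\epsilon$) this difference equals the density—hence also the logarithmic density—of $n$ with $a(n)\in(\alpha-\epsilon,\alpha+\epsilon)$. Thus for arbitrarily small $\epsilon>0$ that logarithmic density exists and is at least $\delta$, contradicting the first part with this $\delta$. Therefore $D$ is continuous. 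The main obstacle is essentially bookkeeping: making sure the constants $K$, $C(\delta)$, and $c(\delta)$ depend only on $\delta$ (so that the final choice of $\epsilon$ can be made), and handling the passage from ``upper logarithmic density at least $\delta$'' in the distribution-function step down to the ``exceeding $\frac12\delta$'' needed to invoke Lemma~\ref{lem:PSlem}; both are routine once one is careful about the order of quantifiers.
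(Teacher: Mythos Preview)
Your proposal is correct and follows essentially the same approach as the paper, which presents Theorem~\ref{thm:genmain2} precisely as the statement obtained by ``staring'' at the $f(n)$ argument and replacing the Bang-based lower bound $\sum_{\ell(p)=q^e}1/p>0$ with the hypothesis $b(q^e)>0$. Your explicit observation that $b(q^e)\ge c(\delta)>0$ because only finitely many prime powers satisfy $q^e\ll_\delta 1$ is exactly the point the paper leaves implicit, and your handling of the squarefull-part trimming and the passage from density to logarithmic density mirrors the paper's proof of Theorem~\ref{thm:main2}.
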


\noindent That the distribution function of $f(n)$ is continuous follows from choosing $a(n)=f(n)$ and $b(n) = \sum_{\ell(p)=n} 1/p$ in this general result. The second assertion of Theorem \ref{thm:genmain2}, that the distribution function of $F(n)$ is continuous, follows from choosing $a(n) = F(n)$ and $b(n) = E(n)$. (Note that Bang's theorem implies that $E(q^e)>0$ for all prime powers $q^e$.)

Several variants of Theorem \ref{thm:genmain2} are possible. For instance, instead of requiring that $b(q^e)>0$ for all prime powers $q^e$, it is enough to impose this condition for the prime powers $q^e$ with $q > q_0$, for any fixed $q_0$. To see this, observe that the proof of Lemma \ref{lem:PSlem} can be carried out including only primes larger than $q_0$ into $\Pp$. This allows one to impose the additional constraint in the conclusion of that lemma that the quotient $a'/a$ have no prime factors up to $q_0$. In the proof of Theorem \ref{thm:genmain2}, $q$ will be  chosen as a divisor of $a'/a$, and so the prime power $q^e$ will have $q>q_0$.

\begin{examples}\mbox{ }
\begin{enumerate}
\item[(a)] Let $a(n) = \sum_{p\mid F_n} 1/p$, where $F_n$ is the $n$th Fibonacci number. It follows from the general results of \cite{LS07} that $a(n)$ has a distribution function. To show continuity, we apply the variant of Theorem \ref{thm:genmain2} just discussed. Let $z(p)$ denote the smallest natural number $n$ for which $p\mid F_n$ (the \textsf{rank of appearance of $p$}). Then \eqref{eq:abdef} holds with $b(n) = \sum_{z(p)\mid n} 1/p$. That $b(q^e)>0$ for all odd prime powers $q^e$ is a consequence of Carmichael's 1913 theorem \cite{carmichael13} that $z(p)=m$ has a prime solution $p$ for each $m\notin \{1,2,6,12\}$. So our variant of Theorem \ref{thm:genmain2} applies, and the distribution function of $a(n)$ is continuous.
\item[(b)] In view of results such as Lemma \ref{lem:GRHlem}, it is natural to compare $f(n)$ with  $f^{*}(n) := \sum_{p-1\mid n}1/p$. Theorem \ref{thm:genmain2} does not apply in this example, and, unlike in (a), there is no obvious variant that does apply. There turns out to be a very good reason for these difficulties: $f^{*}(n)$ does not have a continuous distribution function. Indeed, is it clear that $f^{*}(n) = \frac{1}{2}$ precisely on the density $\frac{1}{2}$ set of odd numbers. With somewhat more effort, the ideas of \cite{PS88} can be pushed to show that $f^{*}(n)$ has a discrete distribution function, with jumps precisely at the elements of $f^{*}(\mathbb{N})$.
\end{enumerate}
\end{examples}

\section{Concluding remarks on generalizations} In all of the theorems appearing in the introduction, $2^n-1$ can be replaced with $a^n-1$ for any fixed integer $a>1$. In the statement of Theorem \ref{thm:main1}, one should then replace $\frac{1}{2}$ with $\sum_{p\mid a}\frac{1}{p}$, and in Theorem \ref{thm:main1point5}, $\log{2}$ should be replaced with $\log \frac{a}{\phi(a)}$. In Theorem \ref{thm:1andthreequarters}, $\frac{1}{2}$ should be replaced with $\frac{\phi(a)}{a}$. Only superficial changes to the proofs are required. Theorem \ref{thm:main2} remains true as stated, by more or less the same proof (we use that $\ell_a(p)=n$ has at least one solution $p$ for each $n>6$; see \cite{roitman97}).

A natural domain of generalization is provided by the nondegenerate Lucas sequences of the form $U_n(P,Q)$, where $P$ and $Q$ are coprime integers. (See Chapter 1 of \cite{ribenboim00} for a refresher on the basic properties of these sequences.) If $a$ is an integer larger than $1$, and we take $P=a+1$ and $Q=a$, then $U_n(P,Q) = \frac{a^{n}-1}{a-1}$. For our purposes, $\frac{a^n-1}{a-1}$ is more or less the same as $a^n-1$, and so this setting morally subsumes the one considered in the last paragraph.

For these sequences $U_n$, a positive integer $m$ divides some term $U_n$ precisely when $m$ is coprime to $Q$. In that case, the set of corresponding $n$ consists of all multiples of the rank of appearance $z(m)$, which is analogous to $\ell(m)$. If we set $\Delta = P^2-4Q$, then $z(p)\mid p-\leg{\Delta}{p}$ for each odd prime $p\nmid PQ\Delta$. Furthermore, for each prime $p$ not dividing $Q$, and every positive integer $k$, we have $z(p^k) \mid z(p)p^{k-1}$. Also, $z(m) = \lcm_{p^k\parallel m} z(p^k)$.

We expect $U_n$-analogues of all of our theorems. For Theorem \ref{thm:main2}, these analogues are fairly simple to show (and the case $P=1, Q=-1$, corresponding to the Fibonacci numbers, was already discussed): Again \cite{LS07} gives existence of the distribution function. It is known that for each $n>30$, there is a prime $p$ with $z(p) = n$ \cite{BHV01}. This allows us to deduce continuity from the variant of Theorem \ref{thm:genmain2} discussed above.

The $U_n$ versions of Theorems \ref{thm:main1}, \ref{thm:main1point5}, and \ref{thm:1andthreequarters} also appear attainable. Their statements should be modified as in the first paragraph of this remark, but with $|Q|$ replacing $a$. However, the proofs cannot be viewed as routine, as the literature does not seem to contain explicit analogues of Proposition \ref{prop:granville},  Lemma \ref{lem:GRHlem}, Proposition \ref{prop:FPS} (see p.\ 13 of \cite{ribenboim00} for the $U_n$-analogue of $\lambda$), or Proposition \ref{prop:kurlberg}. However, we believe all of these to be within reach. To give one example, it should be possible to prove the needed variant of Lemma \ref{lem:GRHlem} following Kurlberg and Pomerance, starting from Chen's arguments in \cite{chen02}.

One can also consider analogues of $f$ and $F$ associated to the ``companion'' $V_n$ Lucas sequences. Extending Erd\H{o}s's upper bound \eqref{eq:erdosfupper} to $V_n$ requires slightly more arithmetic. Whereas all but finitely many primes divide some term in the $U_n$ sequence, with finitely many exceptions a prime $p$ divides some term in the $V_n$ sequence if and only if $z(p)$ is defined and even. Using results of Moree \cite{moree97, moree98}, the first author \cite{engberg14} showed that
\[\max_{n \le x} \sum_{p \mid a^n + b^n} \frac1p = \frac13 \log\log\log x + O_{a, b}(1)\]
for $a, b$ positive integers with $a / b$ not a rational square. For an arbitrary nondegenerate Lucas sequence, the constant $1/3$  should be replaced by some density $\delta\in (0, 1)$ depending on $a$ and $b$. We believe that Theorems \ref{thm:main1}, \ref{thm:main1point5}, and \ref{thm:1andthreequarters}, can be extended to $V_n$ Lucas sequences as well.

Establishing the analogue of Theorem \ref{thm:main2} for $V_n$ is easier. While $V_n$ is not a divisibility sequence, it is still the case that $V_n \mid V_{mn}$ whenever $m$ is odd, and this is enough to allow a modified of the argument to run. (To get an idea of how this goes, suppose $a,a'$ are positive integers with $a'/a$ odd, and where $q^e$ is an odd prime power dividing $a'$ but not $a$. If $z(p)=2^{r+1} q^e$, where $2^r \parallel a$, then $p \mid V_{a'}$ but $p\nmid V_{a}$. To make this observation useful, we use again that $z(p)=n$ is solvable for all large $n$, along with the fact that most integers are not divisible by a large power of $2$.)

\subsection*{Acknowledgements}
The authors thank Carl Pomerance and the anonymous referee for several helpful suggestions. The second author is supported by NSF Award DMS-2001581.

\providecommand{\bysame}{\leavevmode\hbox to3em{\hrulefill}\thinspace}
\providecommand{\MR}{\relax\ifhmode\unskip\space\fi MR }
% \MRhref is called by the amsart/book/proc definition of \MR.
\providecommand{\MRhref}[2]{%
  \href{http://www.ams.org/mathscinet-getitem?mr=#1}{#2}
}
\providecommand{\href}[2]{#2}

\end{document}